\documentclass[reqno,11pt]{amsart}

\usepackage[a4paper,left=3cm, hmarginratio=1:1, top=3cm, bottom=3cm]{geometry}
\usepackage{graphicx}

\usepackage{amsmath}        
\usepackage{amssymb}
\usepackage{amsthm}
\usepackage[nooneline,hang]{subfigure}
\usepackage[font=normalsize,singlelinecheck=off,labelfont=bf]{caption}
\usepackage{graphicx}
\usepackage{bbold}
\usepackage[usenames]{color}
\usepackage{dsfont}
\usepackage{floatflt}
\usepackage{enumerate}
\usepackage{mathrsfs}
\usepackage{url}
\usepackage{appendix}

\usepackage{tikz}
\usetikzlibrary{arrows}

\newcommand{\R}{\mathds R}

\newcommand{\N}{\mathds N}
\newcommand{\Z}{\mathds Z}

\newcommand{\sn}{\text{sn}}
\newcommand{\cn}{\text{cn}}
\newcommand{\dn}{\text{dn}}
\renewcommand{\O}{\mathcal O}
\newcommand{\eps}{\varepsilon}

\newcommand{\sign}{\text{sign}}
\renewcommand{\u}{u}
\newcommand{\pu}{\mathcal U}

\newtheoremstyle{plainNoItalics}{}{}{\normalfont}{}{\bfseries}{.}{ }{}

\theoremstyle{plain}
\newtheorem{theorem}{Theorem}[section]
\newtheorem{proposition}[theorem]{Proposition}

\newtheorem{lemma}[theorem]{Lemma}

\theoremstyle{plainNoItalics}
\newtheorem{remark}[theorem]{Remark}

\newtheorem*{theorem*}{Theorem}
\newtheorem*{proposition*}{Proposition}
\newtheorem*{lemma*}{Lemma}
\newtheorem*{corollary*}{Corollary}
\newtheorem*{remark*}{Remark}

\newtheorem*{observation*}{Observation}
\newtheorem*{example*}{Example}
\newtheorem*{examples*}{Examples}
\newtheorem*{assumption*}{Assumption}

\theoremstyle{definition}
\newtheorem{definition}[theorem]{Definition}
\newtheorem*{definition*}{'Definition'}

\newtheorem*{definitionu*}{Definition}

\title[Approximation of sinh-Gordon solution via hyperbolic
orthogonal ring patterns]{Approximation of solutions of the sinh-Gordon equation $\Delta u 
-\sinh(2u)=0$ by hyperbolic orthogonal ring patterns}
\author{Ulrike B\"ucking}

\begin{document}

\date{\today}

\begin{abstract}
We consider hyperbolic orthogonal ring patterns as introduced in~\cite{Bo25} and focus on their characterization by uniformizing variables at the centers of the rings. Given a smooth solution of the sinh-Gordon equation $\Delta u 
-\sinh(2u)=0$, we restrict to a compact subset of its domain and discretize it by square grid lattices with edge length $\eps$. Taking the values of $u$ as Dirichlet boundary conditions, we prove that the corresponding uniformizing variables $u^\eps$ of the hyperbolic ring patterns converge to $u$ in $C^\infty$ with error of order $\eps^2$, given that the pairs of rings suitably converge to circles.
As a consequence we deduce that the hyperbolic orthogonal ring patterns converge to a harmonic map to the hyperbolic plane.
\end{abstract}

\maketitle

\section{Introduction}

A hyperbolic ring is a pair of two 
concentric circles in the hyperbolic plane $H^2$, that form a ring (annulus).
We consider patterns of rings in the hyperbolic plane such that neighboring rings intersect 
orthogonally, i.e., the larger circle of one ring intersects the smaller circle 
of the other orthogonally and vice versa, see Figure~\ref{fig:Ring}~left. In a pattern with the 
combinatorics of the square grid, this orthogonality implies that in one diagonal direction the two 
smaller circles and in the other diagonal direction the two larger circles touch at this point.
Such hyperbolic orthogonal ring patterns have been introduced and studied by 
Bobenko in~\cite{Bo25}. Orthogonal ring patterns may also be defined in 
spherical geometry, see \cite{Bo25}, and in Euclidean geometry, see~\cite{BHR24}. These patterns can 
be characterized by uniformizing parameters  associated to the centers of the rings. 

Orthogonal ring patterns can be considered as 
generalizations of orthogonal circle patterns which have in particular been studied in the context of discrete conformal mappings.
One important question in the theory of discrete conformal maps concerns convergence and approximation issues.
For  planar orthogonal circle patterns with combinatorics of the square grid,
convergence to a smooth conformal map has first been shown in~\cite{Sch97} for the case when the radii of the boundary
circles are given according to the derivative of a conformal map (more precisely by the logarithm of its modulus). 

As orthogonal ring patterns share similar properties 
as orthogonal ring patterns, it is a natural goal to prove 
convergence to suitable smooth functions. As explained 
in~\cite[Section~4]{Bo25}, the uniformizing parameters of hyperbolic orthogonal ring pattern at a given point infinitesimally (i.e.\ in the limit of small smoothly varying rings) approximate smooth functions $u$ which satisfy the sinh-Gordon equation $\Delta
u -\sinh(2u)=0$.  In this article we assume given such a smooth function $u$. We approximate a compact subset 
of its domain by parts of a square grid lattice with edge length $\eps>0$.
Reading off the values of $u$ at boundary vertices as Dirichlet boundary condition for the uniformizing variables $u^\eps$, corresponding hyperbolic 
orthogonal ring patterns exist by~\cite[Theorem~6.1]{Bo25}. 
We prove that $u^\eps$ uniformly approximate the values of the given function $u$ at the 
corresponding vertices with an error of order $\eps^2$. Furthermore, we show 
that the convergence even is in $C^\infty$ thanks to the regular structure of  the square grid lattice. 

The convergence of the uniformizing variables $u^\eps$ to $u$ implies that the 
hyperbolic ring patterns as mappings to $H^2$ converge to a harmonic map to $H^2$. See for example~\cite{He82,FD22} for a connection between such harmonic maps and solutions of the sinh-Gordon equation $\Delta u -\sinh(2u)=0$. As explained for example in~\cite{BJS19},
harmonic maps to $H^2$ are Gauss maps of CMC-surfaces in Lorentz space $\R^{2,1}$ and the function $u$ represents the conformal metric $\text{e}^{2u}$. Note that 
discrete CMC-surfaces in Lorentz space $\R^{2,1}$ can be constructed using hyperbolic ring 
patterns, see~\cite[Sections 11\&12]{BHS24}. Thus, the family of hyperbolic ring patterns for $u^\eps$ corresponds to a family of associated $S_1$-cmc pairs. 

The paper is organized as follows. In Sections~\ref{sec:not} and 
\ref{sec:char}, we introduce some notation 
and recall from~\cite{Bo25} useful characterizations and properties of hyperbolic orthogonal ring patterns. We focus in 
particular on the uniformizing variables associated to the centers of the rings,
which characterize these patterns. In Section~\ref{Sec:Convu}, we formulate and prove our theorem on $C^\infty$-convergence for these uniformizing variables 
and deduce the convergence for hyperbolic orthogonal ring patterns in Section~\ref{Sec:ConvR}.

\section*{Acknowledgements}
The author warmly thanks Alexander I.~Bobenko and Nina Smeek for 
insightful conversations on orthogonal ring patterns.

\section{Notation}\label{sec:not}
The following notation will be used throughout this article.

Let $\mathscr D\subset \Z^2$ be a cell complex defined by a subset of  elementary squares of the lattice $\Z^2$ and assume that  $\mathscr D$ is 
simply connected. The vertices are indexed by $(m,n)\in\Z^2$. In order to 
distinguish centers of rings and their intersection points, we split the 
vertices of $\mathscr D$ in two subsets: those vertices $v_{m,n}$  with ``odd'' index $m+n\equiv 1 \pmod 2$ will be associated to intersection 
(touching) points of circles$t_{m,n}$ whereas vertices $v_{m,n}$ with ``even'' index $m+n\equiv 0 \pmod 2$ will be identified with centers $k_{m,n}\in H^2$ of rings. To these two sets of
vertices we associate two planar graphs $G$ and $G^*$, which are dual to each other, as follows.
The vertices $V(G)$ are all even vertices of 
$\mathscr D$. The edges $E(G)$ correspond to faces of $\mathscr D$, that is
two vertices of $G$ are connected by an edge if and only if they
are incident to the same face of $\mathscr D$. The
dual graph $G^*$ is constructed analogously by taking for
$V(G^*)$ all odd vertices of $\mathscr D$.

For every ring we denote the inner circle and its hyperbolic radius by small 
letters $c$ and $r$ and 
the outer circle and its hyperbolic radius by capital letters $C$ and $R$. The 
outer 
radius will always be positive. By allowing $r$ to be negative, an orientation 
is assigned to the rings: a positive radius $r$ corresponds to counterclockwise 
orientation and a negative radius $r$ to clockwise orientation. As for vertices, we use
subscripts to associate circles and radii to vertices of $G$.

\section{Characterization of hyperbolic orthogonal ring 
patterns}\label{sec:char}
We consider hyperbolic orthogonal ring patterns with the combinatorics of the square grid as defined in~\cite{Bo25}. In this section, we gather definitions and properties as a basis for the convergence proof.

\begin{figure}
\definecolor{wwzzff}{rgb}{0.4,0.6,1}
\definecolor{ffqqqq}{rgb}{1,0,0}
\definecolor{qqwuqq}{rgb}{0,0.39215686274509803,0}
\definecolor{bubtba}{rgb}{0.7,0.7,0.7294}
\begin{tikzpicture}[line cap=round,line join=round,>=triangle 45,x=1cm,y=1cm, 
scale=0.25]
\clip(-17.94,-7.58) rectangle (2.7,7.58);
\draw[line width=0.5pt,color=qqwuqq,fill=qqwuqq,fill 
opacity=0.10000000149011612] 
(-5.035034500214293,-2.4181783611566474) -- 
(-5.4168561390576455,-2.2332128613709403) -- 
(-5.601821638843353,-2.6150345002142927) -- (-5.22,-2.8) -- cycle; 
\draw[line width=0.5pt,color=qqwuqq,fill=qqwuqq,fill 
opacity=0.10000000149011612] 
(-6.431020875765418,3.1017053404356716) -- 
(-6.192726216201089,2.750684464670254) -- 
(-5.841705340435672,2.9889791242345822) -- (-6.08,3.34) -- cycle; 
\draw [line width=0.5pt] (-11,0)-- (-5.22,-2.8);
\draw [line width=0.5pt] (-11,0)-- (-6.08,3.34);
\draw [line width=0.5pt,color=ffqqqq] (-11,0) circle (5.946595664748024cm);
\draw [line width=0.5pt,color=wwzzff] (-11,0) circle (6.422491728293622cm);
\draw [line width=0.5pt,color=wwzzff] (-3.842361945757712,0.04383855482872404) 
circle (3.9839307642611974cm);
\draw [line width=0.5pt,color=ffqqqq] (-3.842361945757712,0.04383855482872404) 
circle (3.1599532171262914cm);
\draw [line width=0.5pt] (-6.08,3.34)-- 
(-3.842361945757712,0.04383855482872404);
\draw [line width=0.5pt] (-3.842361945757712,0.04383855482872404)-- 
(-5.22,-2.8);
\begin{scriptsize}
\draw [fill=black] (-11,0) circle (2.5pt);
\draw [fill=black] (-5.22,-2.8) circle (2.5pt);
\draw [fill=black] (-6.08,3.34) circle (2.5pt);
\draw [fill=black] (-3.842361945757712,0.04383855482872404) circle (0.5pt);
\end{scriptsize}
\end{tikzpicture}
\begin{tikzpicture}[line cap=round,line join=round,>=triangle 45,x=1cm,y=1cm, 
scale=0.23]
\clip(-17.6,-8.16) rectangle (4.36,14.58);
\draw [line width=0.5pt,color=ffqqqq] (-11,0) circle (5.946595664748024cm);
\draw [line width=0.5pt,color=wwzzff] (-11,0) circle (6.422491728293622cm);
\draw [line width=0.5pt,color=wwzzff] (-3.842361945757712,0.04383855482872404) 
circle (3.9839307642611974cm);
\draw [line width=0.5pt,color=ffqqqq] (-3.842361945757712,0.04383855482872404) 
circle (3.1599532171262914cm);
\draw [line width=0.5pt,color=ffqqqq] (-2.6230885130931494,5.686765115095301) 
circle (4.178222532816651cm);
\draw [line width=0.5pt,color=wwzzff] (-9.2348747242803,7.987300492053616) 
circle (5.616995316834836cm);
\draw [line width=0.5pt,color=ffqqqq] (-9.2348747242803,7.987300492053616) 
circle (5.0957697542131255cm);
\draw [line width=0.5pt,color=wwzzff] (-2.6230885130931494,5.686765115095301) 
circle (4.848445226453897cm);
\draw [line width=0.5pt] (-11,0)-- (-3.842361945757712,0.04383855482872404)-- 
(-2.6230885130931494,5.686765115095301)-- (-9.2348747242803,7.987300492053616);
\draw [line width=0.5pt] (-9.2348747242803,7.987300492053616)-- (-11,0);
\begin{scriptsize}
\draw [fill=black] (-11,0) circle (2.5pt);
\draw [fill=black] (-6.08,3.34) circle (2.5pt);
\draw [fill=black] (-3.842361945757712,0.04383855482872404) circle (0.5pt);
\draw [fill=black] (-2.6230885130931494,5.686765115095301) circle (2.5pt);
\draw [fill=black] (-9.2348747242803,7.987300492053616) circle (2.5pt);
\end{scriptsize}
\end{tikzpicture}
\begin{tikzpicture}[line cap=round,line join=round,>=triangle 45,x=1cm,y=1cm, 
scale=0.2]
\clip(-17.6,-7.23) rectangle (4.36,15.51);
\draw [line width=0.5pt,color=ffqqqq] (-10.67,0.54) circle 
(5.376625335654326cm);
\draw [line width=0.5pt,color=wwzzff] (-10.67,0.54) circle 
(6.392034105040429cm);
\draw [line width=0.5pt,color=wwzzff] 
(-3.7739033559328683,-0.44035128438147647) circle (4.428220586770985cm);
\draw [line width=0.5pt,color=ffqqqq] 
(-3.7739033559328683,-0.44035128438147647) circle (2.7675146910364123cm);
\draw [line width=0.5pt,color=ffqqqq] (-6.945223177422092,2.812195011594367) 
circle (1.0135034546237376cm);
\draw [line width=0.5pt,color=wwzzff] (-9.331395372152194,8.669965985063774) 
circle (6.243405262193826cm);
\draw [line width=0.5pt,color=ffqqqq] (-9.331395372152194,8.669965985063774) 
circle (5.166921092662287cm);
\draw [line width=0.5pt,color=wwzzff] (-6.945223177422092,2.812195011594367) 
circle (3.5798615358879218cm);
\draw [line width=0.5pt] (-10.67,0.54)-- 
(-3.7739033559328683,-0.44035128438147647)-- 
(-6.945223177422092,2.812195011594367)-- (-9.331395372152194,8.669965985063774);
\draw [line width=0.5pt] (-9.331395372152194,8.669965985063774)-- (-10.67,0.54);
\begin{scriptsize}
\draw [fill=black] (-10.67,0.54) circle (2.5pt);
\draw [fill=black] (-6.08,3.34) circle (2.5pt);
\draw [fill=black] (-3.7739033559328683,-0.44035128438147647) circle (0.5pt);
\draw [fill=black] (-6.945223177422092,2.812195011594367) circle (2.5pt);
\draw [fill=black] (-9.331395372152194,8.669965985063774) circle (2.5pt);
\end{scriptsize}
\end{tikzpicture}
 \caption{Left: Two orthogonally intersecting rings. Middle: 
The inner circles touch along one dirction and the outer 
circles touch at the same point along the other direction. 
Right: If the orientation (i.e.\ sign of radii) of the inner 
circles differ, the centers lie on the same side of the common 
tangent.}\label{fig:Ring}
\end{figure}

\begin{definition}[{Orthogonal hyperbolic ring patterns, see~\cite[Def.~2.1 and Sec.~3]{Bo25}}]\label{def:Ringpattern}
An \emph{orthogonal hyperbolic ring pattern} for $\mathscr D$ consists of rings
$(C,c)$ for all vertices of $G$ and points $t\in H^2$ for all
vertices of $G^*$ satisfying the following properties:
\begin{enumerate}[(1)]
 \item The rings associated to incident vertices $v_i$ and $v_j$ in $G$
intersect orthogonally, i.e., the outer circle $C_i$ of one of the vertices 
intersects the inner circle $c_j$ of the other vertex orthogonally and vice 
versa.
\item For any four consecutively neighboring rings associated to the vertices 
$v_{m,n}$, $v_{m+1,n-1}$, $v_{m+2,n}$ and $v_{m+1,n+1}$ of $G$ the 
inner circles $c_{m,n}$ and $c_{m+2,n}$ and the outer circles $C_{m+1,n-1}$ 
and $C_{m+1,n+1}$ intersect at the same point $t_{m+1,n}$, see 
Figure~\ref{fig:Ring}.
\item For any ring $(C_{m,n}, c_{m,n})$ the four touching points 
$t_{m+1,n}= c_{m,n}\cap c_{m+2,n}$, $t_{m,n+1}= C_{m,n}\cap 
C_{m,n+2}$, $t_{m-1,n}= c_{m,n}\cap c_{m-2,n}$ and $t_{m,n-1}= 
C_{m,n}\cap C_{m,n-2}$ have the same orientation, i.e., are in 
counterclockwise order if $r_{m,n}$ is positive and in clockwise order if $r_{m,n}$ is negative.
\end{enumerate}
\end{definition}

An internal ring of a hyperbolic orthogonal ring pattern has four neighboring 
rings in $G$, which consequently touch. This configuration of five rings is 
called a \emph{ring flower}. The existence of a ring flower for every internal 
ring is equivalent to conditions~(1) and (2) of 
Definition~\ref{def:Ringpattern}. Analogously as in~\cite{Bo25} we define 
\emph{generalized orthogonal hyperbolic ring patterns} as sets of orthogonal 
ring patterns for $\mathscr D$, such that every internal ring possesses a ring 
flower. Generalized hyperbolic orthogonal ring patterns do not necessarily 
satisfy condition~(3) of Definition~\ref{def:Ringpattern}.

As explained in~\cite[Sec.~3]{Bo25}, the radii of the rings 
may be expressed in terms of Jacobi elliptic functions.
Due to the hyperbolic Pythagoras' Theorem $\cosh R_j \cosh r_i =\cosh r_j \cosh R_i$ for incident vertices $v_i$ and $v_j$ in $G$. Thus, for every ring the quotient $\cosh r/\cosh R$ is the same and defines a constant $q=\cosh r/\cosh R \leq 1$. Using Jacobi function of modul $q$ (and quater periods $K$ and $K'$), for all vertices of $G$
real variables $\pu$  are introduced by
\begin{align}\label{eq:defbeta}
&\cosh R=\sn (\pu+K+iK', q),\qquad
 \sinh R=i\,\cn (\pu+K+iK', q),\\
 &\sinh r=i\, \dn(\pu+K+iK', q).
\end{align}
Note that~\cite{Bo25} uses the variables $\beta=\pu+K +iK'$ and $\gamma=\pu+K$. As $\sn(\pu+K+iK',q)\geq 1/q$ for real $\pu$
this implies with $q'=\sqrt{1-q^2}$ that
\begin{align}\label{eq:defu}
 \sinh r&=
 -q'\frac{\sn (\pu,q)}{\cn(\pu,q)},\qquad
 &\sinh R&=
 \frac{q'}{q\,\cn(\pu,q)}.
\end{align}

\begin{proposition}[see {\cite[Prop.~3.1 \& Sec.~9]{Bo25}}]
 The formulas~\eqref{eq:defu}
 describe a one to one 
correspondance between the radii $(r,R)\in[-\infty,\infty]\times [R_0,\infty]$ 
of an orthogonal hyperbolic ring pattern with the parameter $q=\cosh r/\cosh R 
\leq 1$ and the variables $\pu$
with $\pu\in[-K,K]$
of Jacobi functions of modulus $q$. Here $R_0=1/q$. We
control the sign by requiring $r = R$ for $q = 1$.
\end{proposition}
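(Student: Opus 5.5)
The plan is to fix the modulus $q<1$ and reduce the statement to a monotonicity statement about a single real function of $\pu$. The case $q=1$ is degenerate, since then $q'=0$, and I will treat it separately at the end. For fixed $q$ the pair $(r,R)$ is determined by $r$ alone, because $\cosh R=\cosh r/q$ with $R>0$. Hence it suffices to show two things: that $\pu\mapsto\sinh r(\pu)=-q'\,\sn(\pu,q)/\cn(\pu,q)$ is a bijection $[-K,K]\to[-\infty,\infty]$, and that the $R$ given by~\eqref{eq:defu} is exactly the one forced by $\cosh R=\cosh r/q$.

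\emph{Consistency with $q=\cosh r/\cosh R$.} I would first check that~\eqref{eq:defu} follows from~\eqref{eq:defbeta}, using the standard half-period shift formulas
\[
\sn(\pu+K+iK')=\frac{\dn\pu}{q\,\cn\pu},\qquad \cn(\pu+K+iK')=-\frac{iq'}{q\,\cn\pu},\qquad \dn(\pu+K+iK')=-\frac{iq'\sn\pu}{\cn\pu}.
\]
Then, using $\sn^2+\cn^2=1$ and $\dn^2=1-q^2\sn^2$, I would compute
\[
\cosh^2 r=1+q'^2\frac{\sn^2}{\cn^2}=\frac{\cn^2+q'^2\sn^2}{\cn^2}=\frac{\dn^2}{\cn^2},
\]
and
\[
\cosh^2R=1+\frac{q'^2}{q^2\cn^2}=\frac{q^2\cn^2+q'^2}{q^2\cn^2}=\frac{\dn^2}{q^2\cn^2}.
\]
So $\cosh r/\cosh R=q$ identically. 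Moreover, on $(-K,K)$ we have $\cn>0$, so $\sinh R>0$, and $\cosh R=\dn/(q\cn)\ge 1/q$ with equality exactly at $\pu=0$. This gives the lower bound $R\ge R_0$, where $R_0$ is the radius with $\cosh R_0=1/q$; I read the statement's ``$R_0=1/q$'' as $\cosh R_0=1/q$.

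\emph{Bijectivity.} The derivative is $\frac{d}{d\pu}\frac{\sn}{\cn}=\frac{\dn}{\cn^2}>0$ on $(-K,K)$, so $\sinh r$ is strictly decreasing there. As $\pu\to\pm K$ we have $\cn\to0^+$ and $\sn\to\pm1$, so $\sinh r\to\mp\infty$. Therefore $\pu\mapsto r$ is a continuous strictly monotone bijection $(-K,K)\to\R$, and it extends to $[-K,K]\to[-\infty,\infty]$ with $R=\infty$ at the endpoints. Combined with the previous step, every admissible pair $(r,R)$ with ratio $q$ has exactly one preimage $\pu$. Evenness of $\cn$ shows that $R(\pu)=R(-\pu)$, so the sign of $r$, i.e.\ the orientation of the ring, is what distinguishes $\pu$ from $-\pu$.

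\emph{The limiting case and the main obstacle.} The delicate point is $q\to1$. There $q'\to0$ and $K\to\infty$, so both sides of~\eqref{eq:defu} degenerate, and the sign convention has to be fixed by continuity. I would use the ratio
\[
\frac{\sinh r}{\sinh R}=-q\,\sn(\pu,q)\;\longrightarrow\;-\tanh\pu \qquad (q\to1).
\]
Together with $|r|=R$ at $q=1$, this shows that the sign of $r$ is determined by the sign of $\pu$ up to one global choice. Requiring $r=R$ fixes that choice, namely the branch where the ratio tends to $+1$, possibly after replacing $\pu$ by $-\pu$. This makes the correspondence well defined uniformly in $q$, and it is the step I expect to need the most care.
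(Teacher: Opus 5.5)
Your argument for $0<q<1$ is correct and follows the same route as the paper's passage from \eqref{eq:defbeta} to \eqref{eq:defu}; the proposition itself is only cited from \cite{Bo25}. The route is: $\cosh r=\dn\pu/\cn\pu$, $\cosh R=\dn\pu/(q\,\cn\pu)\geq 1/q$, and strict monotonicity of $\sn/\cn$ on $(-K,K)$ give the bijection onto $\{(r,R):\cosh r=q\cosh R,\ R>0\}$. Two points need correcting, and both concern the sign that the statement's last sentence is about.

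First, your third shift formula has the wrong sign. The correct formula is $\dn(\pu+K+iK')=+iq'\,\sn\pu/\cn\pu$. Your three formulas are not mutually consistent: given your $\sn$ and $\cn$ shifts, the relation $\frac{d}{d\pu}\sn=\cn\,\dn$ forces the plus sign. With your version, $\sinh r=i\,\dn(\pu+K+iK')$ would give $\sinh r=+q'\sn\pu/\cn\pu$, which contradicts \eqref{eq:defu}. So the check you announce would actually fail. You did not see this because you only verified $\cosh^2 r$, which does not detect the sign.

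Second, your limiting argument for the sign convention does not work. For fixed $\pu$ and $q\to1$ you get $r,R\to 0$ and $\sinh r/\sinh R=-q\,\sn\pu\to-\tanh\pu\in(-1,1)$, so the condition $|r|=R$ never appears in this limit. The convention refers to the square-root choice in \eqref{eq:defbeta}. From $\cosh R=\sn\beta$, $\cosh r=q\cosh R$ and $q^2\sn^2+\dn^2=1$, the value of $\sinh r$ is determined only up to sign, $\sinh r=\pm i\,\dn\beta$. Since $\dn=\cn$ at $q=1$, the plus sign gives $\sinh r=\sinh R$, i.e.\ $r=R$. In the $\pu$-variable this corresponds to holding $\gamma=\pu+K$ fixed while $K\to\infty$. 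There $-q\,\sn\pu=q\,\cn\gamma/\dn\gamma\to 1$. Hence the minus sign in \eqref{eq:defu} already encodes the convention, and nothing is left to choose; there is no ``possibly replacing $\pu$ by $-\pu$''. Note also that $q=1$ itself is not covered by \eqref{eq:defu}, since $q'=0$ and $K=\infty$. The bijective correspondence is a statement for $0<q<1$.
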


\begin{figure}
\definecolor{wwzzff}{rgb}{0.4,0.6,1}
\definecolor{ffqqqq}{rgb}{1,0,0}
\definecolor{qqwuqq}{rgb}{0,0.39215686274509803,0}
\begin{center}
\begin{tikzpicture}[line cap=round,line join=round,>=triangle 45,x=1cm,y=1cm, 
scale=0.5]
\clip(-13.46,-2.01) rectangle (-1.74,3.73);
\draw[line width=1pt,color=qqwuqq,fill=qqwuqq,fill opacity=0.10000000149011612] 
(-5.209095378584444,-1.3595674553577546) -- 
(-5.80952792322669,-1.148662833942199) -- 
(-6.0204325446422455,-1.7490953785844443) -- (-5.42,-1.96) -- cycle; 
\draw[line width=1pt,color=qqwuqq,fill=qqwuqq,fill opacity=0.10000000149011612] 
(-7.782635053253609,2.765194738014466) -- (-7.367829791268075,2.282559684760858) 
-- (-6.885194738014466,2.697364946746392) -- (-7.3,3.18) -- cycle; 
\draw [shift={(-11,0)},line width=1pt,color=qqwuqq,fill=qqwuqq,fill 
opacity=0.10000000149011612] (0,0) -- (1.2332339718314052:3) arc 
(1.2332339718314052:40.67772294050895:3) -- cycle;
\draw [shift={(-11,0)},line width=1pt,color=qqwuqq,fill=qqwuqq,fill 
opacity=0.10000000149011612] (0,0) -- (-19.354053606636246:3) arc 
(-19.354053606636246:1.2332339718314027:3) -- cycle;
\draw [line width=1pt] (-11,0)-- (-5.42,-1.96);
\draw [line width=1pt] (-11,0)-- (-7.3,3.18);
\draw [line width=1pt] (-7.3,3.18)-- (-4.683780704421436,0.1359712598614212);
\draw [line width=1pt] (-4.683780704421436,0.1359712598614212)-- (-5.42,-1.96);
\draw [line width=1pt] (-11,0)-- (-4.683780704421436,0.1359712598614212);
\draw [fill=black] (-11,0) circle (2.5pt);
\draw[color=black] (-11.73,-0.0255) node {$v$};
\draw [fill=black] (-5.42,-1.96) circle (2.5pt);
\draw[color=black] (-7.59,-1.805) node {$R$};
\draw [fill=black] (-7.3,3.18) circle (2.5pt);
\draw[color=black] (-8.44,2.565) node {$r$};
\draw [fill=black] (-4.683780704421436,0.1359712598614212) circle (1pt);
\draw[color=black] (-3.96,-0.0255) node {$v_k$};
\draw[color=black] (-5.43,2.045) node {$R_k$};
\draw[color=black] (-4.47,-1.205) node {$r_k$};
\draw[color=qqwuqq] (-8.98,0.735) node {$\varphi_k$};
\draw[color=qqwuqq] (-8.59,-0.375) node {$\psi_k$};
\end{tikzpicture}
\end{center}
 \caption{Two orthogonal triangles associated to two neighboring 
rings centered at $v$ and $v_k$.}\label{fig:orthquad}
\end{figure}

Consider an edge $(v,v_k)$ in $G$, see Figure~\ref{fig:orthquad}.
The angles $\varphi_k$ and $\psi_k$ at $v$ in the two right-angled hyperbolic triangles spanned by the 
centers of the two rings and their respective intersection points may be computed by the formulas:
\begin{equation*}
 \varphi_k=\arctan\frac{\tanh R_k}{\sinh r}\qquad \text{and}\qquad 
\psi_k=\arctan\frac{\tanh r_k}{\sinh R}.
\end{equation*}
Using the above parametrization with elliptic functions and the transformations detailed
in~\cite[Sec.~3]{Bo25}, the opening angle $\theta_k=\varphi_k+\psi_k$ may be expressed for $r\not=0$ as
\begin{align*}
 \theta_k &=\arg\left(-\sign(r)   \frac{\sn\left( 
\frac{\pu-\pu_k+iK'}{2}\right)}{\sn\left(K+
\frac{\pu+\pu_k-iK'}{2}\right)}
\right) \\
&= 
\begin{cases}
-\arg\left( 
\sn (K+\frac{\pu+\pu_k+iK'}{2})\, \sn\frac{\pu-\pu_k+iK'}{2}\right)
+\pi & \text{if } r>0,\\[1ex]
-\arg\left( 
\sn (K+\frac{\pu+\pu_k+iK'}{2})\, \sn\frac{\pu-\pu_k+iK'}{2}\right)
 & \text{if } r<0,
\end{cases}
\end{align*}
Introducing for $x\in\R$ the smooth function 
\begin{equation}\label{eq:defg}
 g(x):=\frac{\pi}{2} -\arg\sn\left(\frac{x+iK'}{2}\right) 
=\arctan\frac{(1+q)\sn\frac{x}{2}}{\cn\frac{x}{2}\dn\frac{x}{2}}
\end{equation}
we have (see~\cite[Lemma~3.3]{Bo25}) for two orthogonally 
intersecting hyperbolic rings 
\begin{align}\label{eq:thetag}
  \theta_k= &\begin{cases}
 g(2K+\pu +\pu_k) + g(\pu-\pu_k) &\text{if } r> 0,\\
g(2K+\pu +\pu_k) + g(\pu-\pu_k)- \pi  & \text{if } r< 0.
\end{cases}
\end{align}
For values in the open interval $\pu,\pu_k\in (-K,K)$ we also have
\begin{equation}\label{eq:dtheta}
 \frac{\partial \theta_k}{\partial \pu_k} <0,
\end{equation}
see the Appendix for the calculations. 

Further properties of $g$ are detailed 
in~\cite[Sec.~2]{Bo25}. The 
anti-derivative of $g$ is $F(x)=\int_0^x g(t)dt$,
which is an even convex function with $F(0)=0$, $F'(x)=g(x)$ and 
$F''(x)=(\dn x+q\cn x)/2$.

Applying these representations for the opening angles and their sum  at interior rings, the correspondance between radii and the variables $\pu$ can be used to
acutally characterize generalized hyperbolic orthogonal ring patterns.

\begin{theorem}[{\cite[Theorem~3.2]{Bo25}}]
 Rings build a generalized orthogonal ring pattern in the hyperbolic plane if and only if they are given by the variables 
$\pu\in[-K,K]$
satisfying the following equation for every internal vertex of $G$:
\begin{equation}\label{eq:betaclose}
\prod_{v_k\sim v} \frac{\sn\left( 
\frac{\pu-\pu_k+iK'}{2}\right)}{\sn\left(K+
\frac{\pu+\pu_k-iK'}{2}\right)} = 1.
\end{equation}
\end{theorem}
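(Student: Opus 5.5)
The plan is to reduce the statement to the closing condition for the opening angles around each internal ring. By the definition of a generalized pattern, rings form a generalized orthogonal ring pattern exactly when every internal ring has a ring flower. I will check that this holds exactly when the four opening angles $\theta_k$ at the center $v$ of that ring sum to $2\pi$ modulo $2\pi$, with the correct orientation. Then I will translate the angle condition into the product condition~\eqref{eq:betaclose} by taking arguments.

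\textbf{Step 1 (local construction).} Fix an interior vertex $v$ with neighbors $v_1,\dots,v_4$ in cyclic order. Given the radii $(r,R)$ and $(r_k,R_k)$, the hyperbolic Pythagorean relation $\cosh R_k\cosh r=\cosh r_k\cosh R$, which is guaranteed by the common modulus $q$, lets one place each neighboring ring orthogonal to the ring at $v$. The placement is unique up to rotation about $v$.
\begin{itemize}
\item The two right triangles in Figure~\ref{fig:orthquad} have angles $\varphi_k$ and $\psi_k$ at $v$. They lie between the ray from $v$ to the touching point with the inner circle and the ray to the touching point with the outer circle.
\item Condition~(2) of Definition~\ref{def:Ringpattern} says that consecutive neighbors $v_k$ and $v_{k+1}$ share an intersection point with the ring at $v$. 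That point lies on $c$ or on $C$, alternating.
\end{itemize}
Hence one can rotate the neighbors successively so that each shared point matches. The flower closes exactly when the total turning $\sum_k(\varphi_k+\psi_k)=\sum_k\theta_k$ equals $2\pi$ modulo $2\pi$. The case $r<0$ is the same with orientations reversed, as in Figure~\ref{fig:Ring} right.

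\textbf{Step 2 (elliptic translation).} Substitute the expression
\[
\theta_k=\arg\Bigl(-\sign(r)\,\sn\bigl(\tfrac{\pu-\pu_k+iK'}{2}\bigr)\big/\sn\bigl(K+\tfrac{\pu+\pu_k-iK'}{2}\bigr)\Bigr).
\]
The sign factor appears four times, so $(-\sign r)^4=1$. Therefore $\sum_k\theta_k\equiv 0 \pmod{2\pi}$ is equivalent to the argument of the product in~\eqref{eq:betaclose} vanishing.

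To upgrade this to the product being equal to $1$, I show that each factor has modulus one. For real $\pu,\pu_k$, the arguments $\tfrac{\pu-\pu_k+iK'}{2}$ and $K+\tfrac{\pu+\pu_k-iK'}{2}$ are related through the conjugation symmetry $\overline{\sn z}=\sn\bar z$ together with the quarter-period shift identities for $\sn$. This should give that the modulus of the ratio equals $1$, which is consistent with the formula for $g$ in~\eqref{eq:defg} being a pure argument.

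\textbf{Step 3 (converse).} Given $\pu\in[-K,K]$ satisfying~\eqref{eq:betaclose}, define radii by~\eqref{eq:defu}. By the Proposition preceding the statement, this is a bijection onto admissible radii. Then build the flowers by Step 1, and glue them along the simply connected complex $\mathscr D$ by developing rings successively.

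\textbf{Main obstacle.} The endpoint cases $\pu=\pm K$ (infinite radii) and $r=0$, where the formula for $\theta_k$ was stated only for $r\neq0$, need a separate limiting argument. The bookkeeping of the alternating inner and outer touching points in Step 1 also needs care. I expect the modulus-one check in Step 2 to be routine from the standard Jacobi identities.
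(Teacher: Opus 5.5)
Your proposal is correct and follows the route the paper indicates for this cited result: an interior flower closes iff $\sum_k\theta_k\in 2\pi\Z$, and substituting the elliptic expression for $\theta_k$ turns this into~\eqref{eq:betaclose}, since the sign factor drops out as $(-\sign r)^4=1$. The modulus-one step you left open is immediate, because $\sn(z+iK')=1/(q\,\sn z)$ together with $\sn(\bar z)=\overline{\sn z}$ gives $|\sn(x\pm iK'/2)|=q^{-1/2}$ for all real $x$, and the numerator and denominator of each factor lie on these two lines.
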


Using some transformations (see~\cite[Sec.~9]{Bo25} for more details) we obtain 
the following form:
\begin{equation}\label{eq:uclose}
 0= \arg \prod_{k=1}^4\left( 
1-i(1+q) \frac{\sn}{\cn\, \dn}(\frac{\pu_k-\pu}{2}) \right) -\arg
\prod_{k=1}^4\left( 
1-i\frac{q'^2}{1+q}\frac{\sn}{\cn\, \dn}(\frac{\pu_k+\pu}{2}) \right).
\end{equation}

\begin{theorem}[See {\cite[Theorem~3.4 \& Sec.~9]{Bo25}}]
The uniformizing variables 
$\pu$ determine a hyperbolic orthogonal
ring pattern if and only if they lie in the interval 
$\pu\in [-K, K]$
and for all internal vertices satisfy the condition
\begin{equation}\label{eq:condint}
\sum_{k=1}^4 g(2K+\pu +\pu_k) + g(\pu-\pu_k)=2\pi,
\end{equation}
where the sum is taken over all four incident vertices and $g$ is defined
above.

For a boundary vertex $v$ the variables $\pu$ satisfy
\begin{align*}
\sum g(2K+\pu +\pu_k) + g(\pu-\pu_k)&= \Theta(v), & \qquad r>0, \\
 \sum  g(2K+\pu +\pu_k) + g(\pu-\pu_k)&= \Theta(v)+\pi {\mathcal V}(v), & \qquad
r<0,
\end{align*}
where $\Theta(v)$ is the total nominal angle at the vertex (positive for $r > 0$
and negative for $r < 0$), and ${\mathcal V} (v)$ is the valence of $v$, i.e.\ 
the number 
of neighboring rings orthogonal to the ring centered at $v$.
\end{theorem}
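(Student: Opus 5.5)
We would derive the statement from the closing condition of the preceding theorem together with the explicit angle formula~\eqref{eq:thetag}. By the characterization of generalized patterns, the variables $\pu$ give rings satisfying conditions~(1) and~(2) of Definition~\ref{def:Ringpattern} exactly when~\eqref{eq:betaclose} holds at every internal vertex. Taking arguments in~\eqref{eq:betaclose} shows that $\sum_k\theta_k\equiv 0\pmod{2\pi}$. Here $\theta_k$ is the opening angle at $v$ spanned by the two right-angled triangles of Figure~\ref{fig:orthquad}, and the sign $-\sign(r)$ produces the same factor $(-1)^4=1$ for all four neighbours. So the task is to identify exactly when condition~(3) holds and to pin down the multiple of $2\pi$.

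First, we substitute~\eqref{eq:thetag} for each edge $(v,v_k)$. For $r>0$ the sum of the $\theta_k$ equals $\sum_k g(2K+\pu+\pu_k)+g(\pu-\pu_k)$. For $r<0$ each term carries an extra $-\pi$, so the sum equals $\sum_k\bigl(g(2K+\pu+\pu_k)+g(\pu-\pu_k)\bigr)-\mathcal V(v)\pi$.

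Second, we interpret condition~(3) geometrically. The four touching points $t$ around $v$ lie in cyclic order with orientation $\sign(r)$ exactly when the oriented angles $\theta_k$ between consecutive touching points all have sign $\sign(r)$ and wind around $v$ exactly once. This gives total angle $2\pi$ for $r>0$ and $-2\pi$ for $r<0$. For $r<0$, the value $-2\pi$ together with the shift $-4\pi$ gives $\sum g=2\pi$ again, since $\mathcal V=4$ at interior vertices. So~\eqref{eq:condint} holds in both cases. Conversely, suppose~\eqref{eq:condint} holds with $\pu\in[-K,K]$. Each $g$-sum term lies in a bounded range: $g$ is odd, increasing, with $g(x)\in(-\pi/2,\pi/2)$ on the relevant intervals and $g(2K+\cdot)\in(0,\pi)$. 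Hence each $\theta_k$ lies in $(0,\pi)$ for $r>0$, respectively $(-\pi,0)$ for $r<0$. Their sum being $\pm2\pi$ then forces the touching points to go around $v$ exactly once in the correct orientation, which is condition~(3). The endpoint $r=0$, i.e.\ $\pu=0$, is handled by continuity, or directly, since then $\varphi_k=\pi/2$.

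For boundary vertices, the same computation gives the stated formulas directly. The sum of the $\theta_k$ is by definition the nominal angle $\Theta(v)$, and for $r<0$ the shift is $\mathcal V(v)\pi$.

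The main obstacle is the step that establishes the range of each $\theta_k$, namely $\theta_k\in(0,\pi)$ with sign $\sign(r)$. This range is what excludes winding numbers other than one and makes condition~(3) equivalent to choosing the branch $2\pi$ rather than another multiple. We would try to derive it from $\varphi_k=\arctan(\tanh R_k/\sinh r)$ and $\psi_k=\arctan(\tanh r_k/\sinh R)$, using $\sinh R>0$ and $|\tanh r_k|<\tanh R_k$, together with the properties of $g$ from~\cite[Sec.~2]{Bo25}.
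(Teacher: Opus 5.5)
Your argument is correct and follows essentially the route behind the cited result (the paper itself only quotes \cite{Bo25}): you compute the angle sum at $v$ via \eqref{eq:thetag}, the $-\pi$ shifts for $r<0$ produce the $\pi\mathcal V(v)$ term, and the bound $\theta_k\in(0,\pi)$ (resp.\ $(-\pi,0)$) forces the multiple of $2\pi$ to be $\pm 2\pi$. Two small repairs are needed: first, that bound does not follow from the separate ranges of the two $g$-terms, which only give $(-\pi/2,3\pi/2)$, but it does follow from your arctan route once you add $R\ge|r|$ (from $\cosh r=q\cosh R$); second, for the converse you should note that each factor in \eqref{eq:betaclose} has modulus one, since taking arguments alone only gives one direction and you need $\sum\theta_k=\pm2\pi$ to yield a product equal to $1$, hence conditions (1) and (2).
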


\begin{remark}\label{rem:arctan}
 For the goal of convergence, we are mostly interested in the case
of non-negative radii of the inner circles
corresponding to $\pu\in[-K,0]$. (The case of non-positive radii is analogous.) Using transformations as in~\cite[Section~9]{Bo25}, we deduce that
\begin{align*}
 \theta_k&= 
 g(2K+\pu +\pu_k) + g(\pu-\pu_k) \\
 &= \pi -\arg\left( 1-i\frac{q'^2}{1+q}\frac{\sn}{\cn\, 
\dn}(\frac{\pu_k+\pu}{2}) \right)  -\arg i\left(
1+i(1+q) \frac{\sn}{\cn\, \dn}(\frac{\pu_k-\pu}{2})
\right) \\
&= \frac{\pi}{2} -\arg\left( 1-i\frac{q'^2}{1+q}\frac{\sn}{\cn\, 
\dn}(\frac{\pu_k+\pu}{2}) \right)  +\arg \left(
1-i(1+q) \frac{\sn}{\cn\, \dn}(\frac{\pu_k-\pu}{2}) \right)
\end{align*}
Thus we can rewrite the left hand side of equation~\eqref{eq:condint} as
\begin{multline}
\sum_{k=1}^4 (g(2K+\pu +\pu_k) + g(\pu-\pu_k))-2\pi \\
=\arg \prod_{k=1}^4\left( 
1-i(1+q) \frac{\sn}{\cn\, \dn}(\frac{\pu_k-\pu}{2})
\right) -\arg \prod_{k=1}^4\left( 
1-i\frac{q'^2}{1+q}\frac{\sn}{\cn\, 
\dn}(\frac{\pu_k+\pu}{2}) \right).
\end{multline}
Using that
\begin{align*}
 \arg \prod_{k=1}^4\left( 1-i\frac{q'^2}{1+q}\frac{\sn}{\cn\, 
\dn}(\frac{\pu_k+\pu}{2}) \right) &=
-\sum_{k=1}^4 \arctan\frac{q'^2}{1+q}\frac{\sn}{\cn\, 
\dn}(\frac{\pu_k+\pu}{2}), \\
\arg \prod_{k=1}^4\left( 
1-i(1+q) \frac{\sn}{\cn\, \dn}(\frac{\pu_k-\pu}{2}) \right)
&= -\sum_{k=1}^4 \arctan (1+q) \frac{\sn}{\cn\, 
\dn}(\frac{\pu_k-\pu}{2}),
\end{align*}
we obtain 
\begin{multline}\label{eq:sumarctan}
\sum_{k=1}^4 (g(2K+\pu +\pu_k) + g(\pu-\pu_k))-2\pi \\
=\sum_{k=1}^4 \arctan\frac{q'^2}{1+q}\frac{\sn}{\cn\, 
\dn}(\frac{\pu_k+\pu}{2})
-\sum_{k=1}^4 \arctan (1+q)\frac{\sn}{\cn\, 
\dn}(\frac{\pu_k-\pu}{2}).
\end{multline}
\end{remark}

For the variational description of hyperbolic orthogonal ring patterns, we 
consider the functional
\begin{equation}
 S(\pu)=\sum_{v_j\sim v_k} (F(\pu_j-\pu_k)+F(\pu_j+\pu_k+2K))+ \sum_{v_j} \Phi_j \pu_j,
\end{equation}
where the first sum is taken over all egdes with incident vertices $v_j$ and 
$v_k$ and the second sum over all vertices of $G$. The parameters $\Phi_j$ are 
prescribed as follows:
\begin{equation}\label{eq:defPhi}
 \Phi_j=\begin{cases}
         -2\pi & \text{ for inner rings,} \\
          -\Theta(v_j)  & \text{ for positively oriented boundary 
rings,} \\
  -\pi {\mathcal V}(v_j)-\Theta(v_j)  & \text{ for negatively oriented boundary 
rings.}
        \end{cases}
\end{equation}
Here $\Theta(v)$ is a given parameter (cone angle) at boundary vertices and 
${\mathcal V} (v)$ is the valence of $v$. For further use, note that
\begin{equation}\label{eq:partialS}
 \frac{\partial S}{\partial \pu_j}(\pu)= \sum_{v_j\sim v_k}
(g(\pu_j-\pu_k)+g(\pu_j+\pu_k+2K))+  \Phi_j.
\end{equation}

We denote the set of boundary 
vertices by $\partial V$.

\begin{theorem}[{\cite[Theorems~5.2 and 5.3 \& 
Sec.~9]{Bo25}}]\label{theo:convex}
\begin{enumerate}[a)]
 \item 
 Let $\Theta:\partial V\to (-2\pi, 2\pi)$ be the prescribed cone angles at the 
boundary vertices. Assume that $|\Theta (v_j)|<\pi$ for the boundary vertices 
of valence ${\mathcal V} (v)=1$. Then the critical points of the functional $S$ 
with $\Phi$ defined in~\eqref{eq:defPhi} correspond to hyperbolic orthogonal 
ring patterns.

In particular, if $\pu$ is a critical point of $S$, then all $\pu\in[-K,K]$.
\item  The functional $S$ is convex.
\end{enumerate}
\end{theorem}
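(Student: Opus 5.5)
The statement has two parts, and I would handle (b) first because (a) rests on the same computation.

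For (b), I would compute the Hessian of $S$. The linear part $\sum_j \Phi_j \pu_j$ contributes nothing. For each edge term, the second derivative of $F$ is $F''=(\dn+q\cn)/2$. So for an edge $v_j\sim v_k$ the Hessian of $F(\pu_j-\pu_k)+F(\pu_j+\pu_k+2K)$ equals
\[
a\,(e_j-e_k)(e_j-e_k)^T+b\,(e_j+e_k)(e_j+e_k)^T,
\]
with $a=F''(\pu_j-\pu_k)$ and $b=F''(\pu_j+\pu_k+2K)$.

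It then suffices to show $F''\ge 0$ on $\R$. This follows from $\dn x\ge q'\ge0$ and $|q\cn x|\le q$. I would rather use the identity $\dn^2 x-q^2\cn^2 x=q'^2\ge0$, which gives $\dn x\ge q|\cn x|$ and hence $\dn x+q\cn x\ge 0$. Each edge contribution is therefore a positive semidefinite quadratic form, and summing over edges shows that $S$ is convex. For strict convexity one notes that $F''$ vanishes only at isolated points.

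For (a), a critical point satisfies $\partial S/\partial \pu_j=0$ for every $j$. By \eqref{eq:partialS} and the definition \eqref{eq:defPhi} of $\Phi$, this is exactly the system in the characterization theorem stated above: the sum $2\pi$ at interior vertices, $\Theta$ at positively oriented boundary vertices, and $\Theta+\pi\mathcal V$ at negatively oriented ones. That theorem, however, also requires $\pu\in[-K,K]$. So the remaining task, and in my view the main obstacle, is to show that every critical point automatically has all $\pu_j\in[-K,K]$.

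My approach to this obstacle is a maximum-principle argument using the periodicity and monotonicity of $g$.
\begin{itemize}
\item By its definition \eqref{eq:defg}, $g$ satisfies $g(x+2K)=g(x)+\pi/2$ up to the lift, and $g$ is increasing and odd. Using \eqref{eq:defg}, I would check that for $x$ real, $g(x)+g(-x-2K)$ behaves as in the angle formula \eqref{eq:thetag}.
\item Suppose $\pu_j$ is the maximum over all vertices and $\pu_j>K$. For each neighbor, $\pu_j-\pu_k\ge0$, so $g(\pu_j-\pu_k)\ge0$. Also $\pu_j+\pu_k+2K$ is compared to the point $2K+\ldots$ via the quasi-periodicity. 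Bounding each $\theta_k=g(\pu_j-\pu_k)+g(\pu_j+\pu_k+2K)$ from below by $\pi/2$ plus something strictly positive, the vertex sum would exceed $2\pi$ at an interior vertex.
\item At a boundary vertex, the analogous estimate would exceed $|\Theta|$. The hypothesis $|\Theta|<\pi$ for valence one is exactly what excludes this degenerate case.
\item A symmetric argument at the minimum excludes $\pu_j<-K$.
\end{itemize}
I expect that obtaining precise lower bounds on $\theta_k$ outside $[-K,K]$ requires the explicit arctan representation \eqref{eq:sumarctan} from Remark~\ref{rem:arctan}, and this is where the most care is needed. With all $\pu_j\in[-K,K]$ established, the characterization theorem yields a genuine hyperbolic orthogonal ring pattern. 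In particular the orientation condition (3) holds, because it is encoded by the boundary equations.
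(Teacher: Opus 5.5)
The paper only quotes this theorem from \cite{Bo25}, so there is no in-paper proof to compare with; judged on its own, your part (b) is fine. The edge-Hessian decomposition is correct. The identity $\dn^2x-q^2\cn^2x=q'^2>0$ even gives $F''>0$ everywhere for $0<q<1$, so there are no zeros, isolated or otherwise. Your first bound via $\dn x\ge q'$ would not have sufficed. Reducing (a) to the claim that critical points satisfy $\pu_j\in[-K,K]$ is also right.

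The gap is the range argument, which fails for three reasons.
\begin{enumerate}
\item The identity $g(x+2K)=g(x)+\pi/2$ is false. Since $g'=(\dn+q\cn)/2$, one has $g(x+2K)-g(x)=\frac{\pi}{2}+\frac q2\int_x^{x+2K}\cn t\,dt$, which depends on $x$. The true relations are that $g$ is odd, strictly increasing, and satisfies $g(x+4K)=g(x)+\pi$. With your identity, $g(\pu_j-\pu_k)+g(\pu_j+\pu_k+2K)$ would equal $\pi/2$ at $\pu_j=0$ for every $\pu_k$. In fact at $\pu_k=K$ the value is $\pi-2g(K)=\arcsin q'\neq\pi/2$.
\item Bounding $g(\pu_j-\pu_k)\ge 0$ separately from $g(\pu_j+\pu_k+2K)$ cannot work when $\pu_k$ lies far below $-K$, because the second term is then very negative. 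Maximality of $\pu_j$ does not help here.
\item Even a correct bound $\theta_k>\pi/2$ is too weak at boundary vertices. It only gives an angle sum $>\mathcal V\pi/2$. That contradicts neither $\Theta<2\pi$ for $\mathcal V\le 3$ nor $\Theta<\pi$ for $\mathcal V=1$, so the valence-one hypothesis never actually enters your argument.
\end{enumerate}

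The missing observation is the following. Put $\theta(u,v)=g(u-v)+g(u+v+2K)$. Oddness and $4K$-quasi-periodicity give, for every real $v$,
\[
\theta(-K,v)=g(-K-v)+g(K+v)=0,\qquad \theta(K,v)=g(K-v)+g(v-K)+\pi=\pi .
\]
Moreover $\partial_u\theta=g'(u-v)+g'(u+v+2K)>0$. Hence $\pu_j>K$ forces every term above $\pi$ and the sum above $\pi\mathcal V(v_j)$, while $\pu_j<-K$ forces the sum below $0$.

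Now compare with $-\Phi_j$:
\begin{itemize}
\item it is $2\pi$ at interior vertices;
\item it is $\Theta\in(0,2\pi)$ at positively oriented boundary vertices;
\item it is $\pi\mathcal V+\Theta$ with $\Theta\in(-2\pi,0)$ at negatively oriented boundary vertices.
\end{itemize}
This gives a contradiction at each vertex separately, so no maximum principle is needed. The only case where $|\Theta|<2\pi$ does not suffice is $\mathcal V=1$, which is exactly where $|\Theta|<\pi$ is assumed.
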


Based on the convex functional $S$, hyperbolic orthogonal ring patterns can 
be constructed from boundary values due to the following theorem.

\begin{theorem}[{\cite[Theorem~6.1 \& Sec.~9]{Bo25}}, Dirichlet boundary value 
problem]\label{theo:unique}
 For any choice of prescribed $\pu:\partial V\to [-K,K]$ for boundary rings,
there exists a unique hyperbolic orthogonal ring pattern with the corresponding 
boundary radii given by~\eqref{eq:defu}.
\end{theorem}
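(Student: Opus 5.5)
This is a Dirichlet problem for a strictly convex functional, so I would prove existence and uniqueness variationally, following \cite[Theorem~6.1]{Bo25}.

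\emph{Reduction.} Fix the boundary values $\pu|_{\partial V}$ and regard $S$ as a function of the interior variables $\pu_j$ only, with $\Phi_j=-2\pi$. The boundary terms $\Phi_j\pu_j$ are then constants. By \eqref{eq:partialS}, a critical point of this restricted functional is exactly a solution of the closing condition \eqref{eq:condint} at every interior vertex. By Theorem~3.4 of \cite{Bo25}, such a solution with all $\pu\in[-K,K]$ is a hyperbolic orthogonal ring pattern, and its boundary radii are the prescribed ones via \eqref{eq:defu}. The boundary equations impose no extra constraint: they simply \emph{define} the angles $\Theta(v)$ at boundary vertices.

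\emph{Convexity and uniqueness.} By Theorem~\ref{theo:convex}\,b), $S$ is convex. To get strict convexity in the interior variables, compute the Hessian as a quadratic form:
\[
\sum_{\text{edges}} F''(\pu_j-\pu_k)(\delta_j-\delta_k)^2+F''(\pu_j+\pu_k+2K)(\delta_j+\delta_k)^2 .
\]
Here $F''=(\dn+q\cn)/2$. For $|x|<2K$ we have $F''(x)>0$, while $F''(x+2K)=q'(q'\mp\dots)$ may vanish only at isolated points. If the form vanishes, the first term forces $\delta$ to be constant on each connected component. The zero boundary variation then forces $\delta=0$, since $\mathscr D$ is connected. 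Hence $S$ is strictly convex, and there is at most one critical point, which gives uniqueness.

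\emph{Existence.} This is where the main work lies. Search for a minimizer over the compact box $[-K,K]^{V_{\mathrm{int}}}$; a minimizer there exists by continuity. It remains to show that it does not sit on a face of the box, or, if some $\pu_j=\pm K$, that it is still a critical point. To do this I would evaluate $\partial S/\partial\pu_j$ at $\pu_j=K$. Using the monotonicity and range of $g$ (from \eqref{eq:defg}, $g$ is increasing with $g(\pm 2K)=\pm\pi$, and $g(x)+g(4K-x)=$ const by periodicity), I expect to get
\[
\sum_{k}\bigl(g(K-\pu_k)+g(3K+\pu_k)\bigr)-2\pi\ge 0,
\]
with the symmetric inequality at $\pu_j=-K$. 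Then the one-sided derivative points into the box, so the box minimizer satisfies the Kuhn--Tucker conditions with zero multiplier, i.e.\ it is a genuine critical point.

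Establishing this sign identity, using $g(x+2K)+g(2K-x)=2\pi$-type relations for each of the four terms, is the step I expect to be the main obstacle. The alternative is to pass to $\pu$ outside $[-K,K]$ and invoke part a) of Theorem~\ref{theo:convex}, which guarantees that critical points automatically lie in $[-K,K]$. In that case one would instead need coercivity of $S$ on all of $\R^{V_{\mathrm{int}}}$, which follows from the linear growth of $F$ (slope $\to\pm\pi$ per edge) beating the linear term $-2\pi\pu_j$ when interior degree $4$ gives total slope $4\pi>2\pi$.
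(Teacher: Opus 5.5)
The paper gives no proof of this statement; it imports it from \cite{Bo25}, so I am judging your argument on its own. Your reduction to the interior variables and your uniqueness argument are fine. In fact more holds than you use: $F''=(\dn+q\cn)/2>0$ everywhere, because $\dn^2x-q^2\cn^2x=q'^2>0$. So every edge term of the Hessian is positive definite in $(\delta_j,\delta_k)$, and $S$ is strictly convex on all of $\R^{V_{\mathrm{int}}}$. Your remark that $F''(x+2K)$ may vanish is wrong, but you never use it.

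\textbf{The gap is in existence.} The sign of $\partial S/\partial\pu_j$ on the faces of the box is the entire content of that step. You leave it as an expectation, and the facts you plan to use are false as stated. From $g'=(\dn+q\cn)/2$, $\int_0^{4K}\dn=2\pi$ and $\int_0^{4K}\cn=0$ it follows that $g$ is odd, strictly increasing, and satisfies $g(x+4K)=g(x)+\pi$. Hence $g(\pm 2K)=\pm\pi/2$, not $\pm\pi$, and $g(2K+t)+g(2K-t)=\pi$, not $2\pi$.

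With the correct relations the step is immediate. For every real $y$,
\[
g(K-y)+g(3K+y)=g(K-y)+g(y-K)+\pi=\pi,\qquad g(-K-y)+g(K+y)=0 .
\]
Take an interior vertex, which has four neighbours. On the face $\pu_j=K$ you get $\partial S/\partial\pu_j=4\pi-2\pi=2\pi>0$, and on the face $\pu_j=-K$ you get $-2\pi<0$. Both hold whatever the neighbouring values are, so corners cause no trouble. Moving $\pu_j$ into the box therefore strictly lowers $S$. Consequently the minimizer over the compact box lies in the open box and is a critical point. By strict convexity it is the unique critical point on $\R^{V_{\mathrm{int}}}$, and the characterization \cite[Theorem~3.4]{Bo25} turns it into the ring pattern.

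\textbf{Drop your fallback route.} First, $g$ is unbounded, so $F$ grows quadratically rather than with slope $\pm\pi$. Second, Theorem~\ref{theo:convex}\,a) is stated for prescribed cone angles $\Theta$ satisfying bounds that the Dirichlet problem does not control. So it cannot be invoked to place a global minimizer in $[-K,K]$. The box argument, with the identity above, already shows that the unique critical point lies there.
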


\section{Convergence of variables $u^\eps$ to given solution $\u$ of sinh-Gordon 
equation}\label{Sec:Convu}

In this section we formulate and prove our convergence result for the $u$-variables.

\begin{theorem}\label{theoConv}
Let $\u:D\to(-\infty,0)$ be a smooth map satisfying the sinh-Gordon equation $\Delta \u-\sinh(2\u)=0$. Let 
$B\subset D$ be a compact set which is the closure of its simply connected 
interior $B_{int}$ and assume that the boundary $\partial B$ is smooth. Assume
for simplicity that $0\in B_{int}$.

For each $\eps>0$ let ${\mathscr D}^\eps_B$ be a 
be a subcomplex of the $\Z^2$-lattice scaled by $\eps$ whose support is contained in $B$ and is
homeomorphic to a closed disc. We further assume that $0$ is an 
interior vertex of ${\mathscr D}^\eps_B$. Let $G^\eps_B$ and $(G^\eps_B)^*$ be the corresponding dual graphs.

Set $q'=\eps$ and let $q=\sqrt{1-\eps^2}$ be the modul of Jacobi elliptic
functions. Let $\eps_0>0$ be such that the corresponding quater-period $K_0$ satisfies $K_0(\sqrt{1-\eps_0^2})> \sup_{x\in B}|\u(x)|$.

Then if $\eps_0>\eps>0$ is small enough (depending on $B$ and $\u$), the 
following holds.
\begin{enumerate}[(i)]
\item There exists a hyperbolic ring pattern for ${\mathscr D}^\eps_B$ 
with associated discrete map
$u^\eps$ on $G^\eps_B$ which satisfies equation~\eqref{eq:uclose} at all 
interior vertices and
\begin{equation}\label{eq:boundu}
 u^\eps(v)=\u(v)\qquad \text{for all boundary vertices } v \text{ of } 
G^\eps_B.
\end{equation}

Furthermore, $u^\eps$ approximates $\u$ uniformly on $B$ with error 
of order~$\eps^2$:
\begin{equation}\label{eq:convu}
 \left|u^\eps(v)-\u(v)\right|\leq C\eps^2
\end{equation}
holds for all vertices $v$ of  $G^\eps_B$, where the constant $C$ depends only on $B$ and $\u$, but not on $v$.
\item
If the subcomplexes ${\mathscr D}^\eps_B$ are be chosen such that they 
approximate the compact set $B$ for $\eps\to0$, the discrete maps $u^\eps$ 
converge in $C^\infty(B_{int})$ to $\u$.
\end{enumerate}
\end{theorem}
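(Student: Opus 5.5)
Existence in (i) comes directly from Theorem~\ref{theo:unique}. The assumption $K_0>\sup_B|\u|$, and the monotonicity of $K$ in $q$, give $\u(v)\in(-K,0)\subset[-K,K]$ at the boundary vertices, so there is a unique pattern with $u^\eps=\u$ on $\partial V$. For the error estimate I would follow the scheme of Schramm's convergence proof for orthogonal circle patterns and combine consistency, stability and a maximum principle.

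\textbf{Step 1: consistency.} Define $F_v(\pu)$ as the left-hand side of \eqref{eq:condint} minus $2\pi$, in the arctan form \eqref{eq:sumarctan}. I will evaluate it on the restriction $\u|_{V(G^\eps_B)}$. Neighbours in $G$ are the diagonal points $v\pm\eps(1,\pm1)$, so $\u_k-\u=O(\eps)$. With $q'=\eps$ and $q\to1$, the first sum has $(1+q)\frac{\sn}{\cn\,\dn}(x)=2x(1+O(x^2)+O(\eps^2))$ and the second has $\frac{q'^2}{1+q}\frac{\sn}{\cn\,\dn}(\frac{\u_k+\u}{2})\approx\frac{\eps^2}{2}\tanh(\u)$-type terms. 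I must track the degeneration $q\to1$ carefully, since $\sn\to\tanh$, $\cn,\dn\to\operatorname{sech}$ and $K\to\infty$.

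Taylor expansion, using the symmetry of the four diagonal neighbours to kill the odd orders, should give
\[
F_v(\u)=-\tfrac{\eps^2}{?}\bigl(\Delta\u-\sinh(2\u)\bigr)(v)+O(\eps^4).
\]
The constant is to be fixed by the computation; the diagonal Laplacian carries a factor $2\eps^2$, and the second sum should produce exactly the $\sinh 2\u$ term. This matches the infinitesimal limit in \cite[Sec.~4]{Bo25}. Since $\u$ solves sinh-Gordon, $|F_v(\u)|\le C\eps^4$, with $C$ depending on finitely many derivatives of $\u$ on $B$.

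\textbf{Step 2: stability via a maximum principle.} By \eqref{eq:dtheta}, $F_v$ is strictly decreasing in each neighbour value. Also $\partial F_v/\partial\pu_v>0$ and dominates the sum of the neighbour derivatives: each $g(\pu_v-\pu_k)$ term contributes equally with the opposite sign, and the $g(2K+\pu_v+\pu_k)$ terms add positive diagonal weight of order $\eps^2$. So $F$ is a monotone "elliptic" operator, and a comparison principle holds: if $F_v(w^+)\ge0\ge F_v(w^-)$ at interior vertices and $w^-\le w^+$ on the boundary, then $w^-\le u^\eps\le w^+$.

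I take barriers $w^\pm=\u\pm\lambda\eps^2\phi$, where $\phi$ is a fixed smooth positive function on $B$ such as $\phi(x)=c-|x|^2$. The linearization of $F$ at $\u$ is approximately $-\eps^2(\Delta-2\cosh 2\u)$ up to a constant factor, applied to $\lambda\eps^2\phi$. This gives $F_v(w^\pm)=\mp\lambda\eps^4\,c_1(\Delta\phi-2\cosh(2\u)\phi)+O(\eps^4)+O(\lambda^2\eps^6)$. Choosing $\phi$ with $\Delta\phi-2\cosh(2\u)\phi\le-1$ and then $\lambda$ large makes these signs correct. It remains to enforce $w^\pm$ on the boundary: $w^-\le\u\le w^+$ holds on $\partial V$. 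Comparison then gives \eqref{eq:convu}.

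I expect the \textbf{main obstacle} to be getting uniform control in this comparison. The expansions must be uniform as $q\to1$ with $K\to\infty$, and the barriers must stay inside $(-K,K)$. The comparison argument itself goes as usual: at a vertex where $u^\eps-w^+$ attains a positive maximum, strict monotonicity yields a contradiction.

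\textbf{Part (ii).} For $C^\infty$ convergence I would use the translation invariance of the lattice. For a lattice shift $\tau$ by a vector of $G$, the difference quotients $\delta_\tau u^\eps=(u^\eps(\cdot+\tau)-u^\eps)/\eps$ satisfy a linear equation obtained by the mean value theorem. This equation is a discrete elliptic equation with coefficients that are $O(\eps^2)$-close to those of the discretized linearized operator. The same holds for the corresponding quotients of $\u$, up to an $O(\eps^2)$ truncation error.

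Interior discrete estimates for such operators on compacta, in the style of the discrete Harnack/regularity estimates used by He--Schramm and by Schramm for square grid patterns, bound $\delta_\tau(u^\eps-\u)$ by $C\eps^2$ plus the sup of $u^\eps-\u$ on a slightly larger set. Iterating over higher difference quotients on nested compacta, and using the bound already obtained in (i), gives uniform convergence of all discrete derivatives. This is $C^\infty(B_{int})$ convergence.
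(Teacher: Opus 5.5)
\textbf{The gap is in Step~1.} Your scheme for (i) is the paper's: consistency of the restricted solution, barriers at distance $O(\eps^2)$, and the monotonicity \eqref{eq:dtheta} to trap $u^\eps$ between them. The one step you leave open is the constant marked ``?'' in Step~1, and that is exactly where the argument can fail. With your normalization it does fail.

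You take the neighbours in $G$ to be $v\pm\eps(1,\pm1)$, so $G$ has mesh $\sqrt2\,\eps$, and you set $q'=\eps$. As $q\to1$ we have $\sn\to\tanh$ and $\cn,\dn\to 1/\cosh$. Hence $\frac{\sn}{\cn\,\dn}(x)\to\frac12\sinh 2x$, and $\frac{q'^2}{1+q}=\frac{\eps^2}{2}+O(\eps^4)$. The two sums in \eqref{eq:sumarctan} then become
\[
\sum_{k}\arctan\Bigl((1+q)\tfrac{\sn}{\cn\,\dn}\bigl(\tfrac{u_k-u}{2}\bigr)\Bigr)=2\eps^2\Delta u+O(\eps^4),\qquad \sum_{k}\arctan\Bigl(\tfrac{q'^2}{1+q}\tfrac{\sn}{\cn\,\dn}\bigl(\tfrac{u_k+u}{2}\bigr)\Bigr)=\eps^2\sinh(2u)+O(\eps^4).
\]
So for a solution, $F_v(u)=-\eps^2\bigl(2\Delta u-\sinh(2u)\bigr)+O(\eps^4)=-\eps^2\sinh(2u(v))+O(\eps^4)$. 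This error has exact order $\eps^2$, and it is positive because $u<0$. It is not a multiple of $\Delta u-\sinh(2u)$ for any choice of the ``?''.

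Your own linearization shows that the barrier $\lambda\eps^2\phi$ changes $F_v$ only by $O(\lambda\eps^4)$. The condition $F_v(w^-)\le 0$ would therefore force $\lambda\gtrsim\eps^{-2}$, and no $O(\eps^2)$ estimate follows. With this normalization the scheme is consistent with $2\Delta\tilde u=\sinh(2\tilde u)$, not with the given equation.

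The missing point is that the mesh of $G$ must equal $q'$. This is what the paper does in its proof, even though the wording of the statement suggests your reading. Lemma~\ref{lem:Tayloru} and the operators $\partial_k^\eps,\Delta^\eps$ use the neighbours $v+\eps i^k$, i.e.\ $G$ is treated as a square grid of mesh $\eps$. With that normalization the first sum is $\eps^2\Delta u+O(\eps^4)$: the map $x\mapsto\arctan\bigl((1+q)\frac{\sn}{\cn\,\dn}(x/2)\bigr)$ is odd, and the cubic terms cancel under $d\mapsto -d$. Hence $F_v(u)=-\eps^2(\Delta u-\sinh 2u)+O(\eps^4)$, and your ``?'' equals $1$.

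\textbf{Comparison once this is repaired.} The rest is correct and differs from the paper mainly in execution.
\begin{itemize}
\item The paper uses the constant shift $u\pm C\eps^2$ on interior vertices and $u$ on the boundary. It then shows that the minimizer of the convex functional $S$ over the box $W^\eps$ cannot lie on a face.
\item You use a smooth barrier and a direct comparison principle. For that, your observation $\partial_vF_v+\sum_k\partial_kF_v=2\sum_k g'(2K+u_v+u_k)>0$ is exactly what is needed. Apply it through the mean value theorem along segments, which stay in $[-K,K]$ by Theorem~\ref{theo:convex}a).
\item Your $\eps^4$ bookkeeping is the accurate one. Away from the boundary, a constant shift gains only $2C\cosh(2u)\eps^4$, not the $\eps^2$-term written in Lemma~\ref{lemC}. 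In both versions the barrier constant must therefore be large compared with $C_1$.
\item The constant shift still works because the zeroth-order coefficient $2\cosh(2u)\ge 2$ is positive. Your quadratic $\phi$ is harmless but not needed.
\end{itemize}

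For (ii), the paper bootstraps bounds on the discrete derivatives of $u^\eps$ itself. It uses $\Delta^\eps u^\eps=F(u^\eps,\partial^\eps u^\eps,\eps)$ and Lemma~\ref{lem:Reg}, then concludes by compactness. Your linearized route also works, for the same structural reason. The mean-value coefficients satisfy $a_{vk}=-1+O(\eps^2)$ and $a_{vv}+\sum_k a_{vk}=O(\eps^2)$. After dividing by $\eps^2$, the variable-coefficient part is $O(1)$ times first differences and can be moved to the right-hand side, so the constant-coefficient regularity lemma applies. For $e=u^\eps-u$ this gives $\|\Delta^\eps e\|=O(\eps^2)$, and inductively $O(\eps^2)$ bounds for all discrete derivatives of $e$ on compacta. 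That is a quantitative rate the compactness argument does not provide.
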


Note that the subcomplexes ${\mathscr D}^\eps_B$ may be chosen such that they exhaust the compact set $B$.

For the rest of this section, we always assume that $\u$, $B$, $q$ and $q'$ are 
given as in Theorem~\ref{theoConv}. Also, we assume that $0<\eps<\eps_0$ and we will further reduce the upper bound as necessary.

\subsection{Proof of $C^1$-convergence of $u^\eps$ to $u$}

Our strategy for the proof follows ideas from~\cite{Bue08} and \cite{Bue16}
which are
adapted to the case of  hyperbolic ring patterns. Existence and
uniqueness of the solution $u^\eps$ for the given boundary value problem
follow directly from Theorem~\ref{theo:unique}. As the smooth given solution $\u$  has
the right boundary values by construction, the main idea for our convergence
proof is based on the fact that $\u$ nearly satisfies the discrete closing
condition~\eqref{eq:uclose} at interior vertices. This is a consequence of the
symmetric structure of the underlying $\Z^2$-lattice.

\begin{lemma}[Compare {\cite[Sec.~9]{Bo25}}]\label{lem:Tayloru}
 Let $\u:D\to(-\infty,0)$ be a smooth map satisfying $\Delta \u-\sinh(2\u)=0$.
 Define $\eps_0$, $q'=\eps$, $q=\sqrt{1-\eps^2}$ and $G^\eps_B$
Denote by $u_{G^\eps}$ the restriction
of $\u$ to the vertices of $G^\eps_B$.
Then for $\eps<\eps_0$ small enough and for all 
interior vertices $v$ with neighbors $v_1,\dots,v_4$ we have
\begin{equation*}
 \left| \sum_{k=1}^4 (g(2K+u_{G^\eps}(v) +u_{G^\eps}(v_k)) + g(u_{G^\eps}(v)-u_{G^\eps}(v_k)))-2\pi \right| \leq 
C_1\eps^4,
\end{equation*}
where the constant $C_1$ only depends on $\u$ and $B$.
\end{lemma}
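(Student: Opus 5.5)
The plan is to work with the arctan form \eqref{eq:sumarctan} from Remark~\ref{rem:arctan}, which applies since $\u<0$, i.e.\ we are in the case $\pu\in[-K,0]$. For an interior vertex $v$ write $\delta_k=u_{G^\eps}(v_k)-u_{G^\eps}(v)$. The neighbours of $v$ in $G^\eps_B$ are the four diagonal lattice points $v\pm h e_1'$, $v\pm h e_2'$, where $e_1',e_2'$ are orthonormal diagonal directions and $h$ is the edge length of $G^\eps_B$, a fixed multiple of $\eps$. We Taylor expand both sums in \eqref{eq:sumarctan} in the two small parameters $h$ and $q'=\eps$ simultaneously, aiming to show the following. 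The $O(\eps^2)$ parts combine into a multiple of $\eps^2(\Delta\u-\sinh(2\u))(v)$, which vanishes. Every term of order $\eps^3$ cancels by the point symmetry $v_k\leftrightarrow 2v-v_k$ of the stencil.

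\emph{Step 1 (uniform control in the modulus).} Put $f_q(y)=\frac{\sn}{\cn\,\dn}(y,q)$. At $q=1$ one has $\sn=\tanh$ and $\cn=\dn=\operatorname{sech}$, so $f_1(y)=\sinh y\cosh y=\tfrac12\sinh(2y)$. The Jacobi functions are analytic in $(y,q^2)$ for $|y|$ bounded away from the poles. The hypothesis $K_0>\sup_B|\u|$, together with $K(q)\ge K_0$ for $\eps<\eps_0$, keeps all arguments $(\u(v_k)\pm\u(v))/2$ in a fixed compact set on which $f_q$ is smooth. I would deduce uniform bounds
\[
f_q(y)=\tfrac12\sinh(2y)+O(\eps^2),\qquad f_q(x)=x+a_3(q)x^3+O(x^5),
\]
with $a_3$ bounded. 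I expect this to be the main technical obstacle: all remainders must be uniform in $q\in[\sqrt{1-\eps_0^2},1]$. I would handle it by viewing $\sn,\cn,\dn$ as analytic functions of $(y,k^2)$ near $k^2=1$ and applying Cauchy estimates on a fixed complex neighbourhood of the compact range.

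\emph{Step 2 (the difference sum).} Since $\frac{1+q}{2}=1-\tfrac{\eps^2}{4}+O(\eps^4)$ and $\arctan$ and $f_q$ are odd, we get
\[
\arctan\bigl((1+q)f_q(\delta_k/2)\bigr)=\tfrac{1+q}{2}\,\delta_k+c(q)\,\delta_k^3+O(\delta_k^5),
\]
with $c(q)$ bounded. Expanding $\u$ around $v$, the terms $\delta_k$ of a symmetric pair sum to $h^2\partial_{e'e'}\u(v)+O(h^4)$, because the odd-order terms cancel. Hence $\sum_k\delta_k=h^2\Delta\u(v)+O(h^4)$ and $\sum_k\delta_k^3=O(h^4)$. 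Multiplying by $\frac{1+q}{2}$ only adds $O(\eps^2h^2)=O(\eps^4)$.

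\emph{Step 3 (the sum term).} The argument is $\frac{\eps^2}{1+q}f_q\bigl(\u(v)+\delta_k/2\bigr)=\frac{\eps^2}{2}\cdot\tfrac12\sinh\bigl(2\u(v)+\delta_k\bigr)+O(\eps^4)$, which is $O(\eps^2)$, so $\arctan$ may be replaced by its argument up to $O(\eps^6)$. Expanding $\sinh(2\u(v)+\delta_k)$ to first order in $\delta_k$, the linear terms again cancel in pairs up to $O(h^2)$. The second sum is therefore $\eps^2\sinh(2\u(v))+O(\eps^4)$.

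\emph{Step 4 (conclusion).} Subtracting, the expression in the lemma equals $\kappa\,\eps^2(\Delta\u-\sinh(2\u))(v)+O(\eps^4)$, where $\kappa$ records the normalisation $h/\eps$ of the grid. This should be precisely the normalisation under which the scaling in \cite[Sec.~9]{Bo25} produces the sinh-Gordon equation, and I would check it explicitly. The leading term vanishes because $\u$ solves the equation. All constants depend only on bounds for derivatives of $\u$ up to order four on $B$ and on $K_0$, which gives $C_1=C_1(\u,B)$.
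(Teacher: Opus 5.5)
Your route is the paper's: Taylor-expand both sums in \eqref{eq:sumarctan} and use the point symmetry of the four-neighbour stencil to cancel odd-order terms. Steps 1--3 are correct. Step 1, on uniformity of the remainders in $q\in[\sqrt{1-\eps_0^2},1]$ via analyticity in the modulus near $q=1$ and $|u|\le\sup_B|u|<K_0\le K(q)$, adds detail the paper omits.

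The gap is in Step 4. Your Steps 2 and 3 actually give
\[
\sum_{k=1}^4\bigl(g(2K+u(v)+u(v_k))+g(u(v)-u(v_k))\bigr)-2\pi=\eps^2\sinh(2u(v))-h^2\Delta u(v)+\O(\eps^4).
\]
The two leading terms scale with different parameters, $q'^2=\eps^2$ versus $h^2$. So no overall factor $\kappa$ can make this a multiple of $\Delta u-\sinh(2u)$ unless $h=\eps$. Take the reading you adopt yourself: neighbours in $G^\eps_B$ are the diagonal points of the $\eps$-scaled $\Z^2$-lattice, so $h=\sqrt2\,\eps$. Then the leading term is $\eps^2\sinh(2u(v))-2\eps^2\sinh(2u(v))=-\eps^2\sinh(2u(v))$. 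This is bounded away from zero on the compact set $B$ because $u<0$ there, so the $\O(\eps^4)$ bound would be false.

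The lemma therefore holds exactly under the normalization that the edge length of $G^\eps_B$ equals $q'=\eps$. Equivalently, give $\mathscr D^\eps_B$ mesh $\eps/\sqrt2$, or set $q'=h$. The paper tacitly uses this normalization: its proof asserts the expansion $\eps^2(\Delta u-\sinh(2u))+\O(\eps^4)$, and its discrete derivatives $\partial_k^\eps$ take the neighbours of $v$ to be $v+\eps i^k$. You need to impose $h=\eps$ as part of the setup rather than defer it to a check of $\kappa$. With that fixed, your argument is complete.
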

\begin{proof}
As $\u$ is smooth, the differences $u_{G^\eps}(v_k)-u_{G^\eps}(v)$ for 
incident vertices are of order $\eps$.
Using the representations in Remark~\ref{rem:arctan}, 
especially~\eqref{eq:sumarctan} and a Taylor expansion, we now deduce that for 
$\eps=q'$ small enough
\begin{align*}
 &\sum_{k=1}^4 \arctan (1+q) \frac{\sn}{\cn\, 
\dn}(\frac{u_{G^\eps}(v_k)-u_{G^\eps}(v)}{2}) -\sum_{k=1}^4 
\arctan\frac{q'^2}{1+q}\frac{\sn}{\cn\, 
\dn}(\frac{u_{G^\eps}(v_k)+u_{G^\eps}(v)}{2}) \\
&= \eps^2(\Delta \u(v) -\sinh(2\u(v))) + \O(\eps^4).
\end{align*}
As  $\Delta \u-\sinh(2\u)=0$ by assumption, the claim follows.
\end{proof}

As in~\cite{Bue16}, one key observation for the proof of Theorem~\ref{theoConv} is that we can control the sign of
the leading term in the above Taylor expansion if we replace $u_{G^\eps}$ by 
\begin{align*}
w^\pm &= u_{G^\eps} +\begin{cases}\pm\eps^2C & 
\text{for interior vertices}, \\
        0 & \text{for boundary vertices},
       \end{cases}
\end{align*}
where $C>C_1/32$ is a positive constant. 

\begin{lemma}\label{lemC}
 There exists a constant $C>0$, depending only on $\u$ and its derivatives, such 
that for $\eps$ small enough and all interior vertices $v_0$ there holds
\begin{align*}
&\sum_{v_0\sim v_k} (g(w^+_0-w^+_k)+g(w^+_0+w^+_k+2K))-2\pi >0, \\
&\sum_{v_0\sim v_k} (g(w^-_0-w^-_k)+g(w^-_0+w^-_k+2K))-2\pi <0.
\end{align*}
\end{lemma}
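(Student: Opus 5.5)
The plan is to compare the closing-condition expression evaluated at $w^\pm$ with the one evaluated at $u_{G^\eps}$. Lemma~\ref{lem:Tayloru} bounds the latter by $C_1\eps^4$. I will show that the shift by $\pm\eps^2 C$ changes it by a signed amount of at least about $32C\eps^4$ in the right direction.

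Write $E(w)(v_0)=\sum_{k}(g(w_0-w_k)+g(w_0+w_k+2K))-2\pi$. For an interior vertex $v_0$, each neighbor $v_k$ is either interior or on the boundary. In both cases $w^\pm_0-w^\pm_k=u_0-u_k+\delta_k$ and $w^\pm_0+w^\pm_k=u_0+u_k+\sigma_k$. Here $\delta_k\in\{0,\pm\eps^2C\}$ and $\sigma_k\in\{\pm\eps^2C,\pm2\eps^2C\}$, with the same sign as the sign in $w^\pm$. I will Taylor expand each term of $E(w^\pm)-E(u_{G^\eps})$ to first order, with a second-order remainder.

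The difference terms $g(u_0-u_k+\delta_k)-g(u_0-u_k)=g'(u_0-u_k)\delta_k+\O(\eps^4)$. Since $u_0-u_k=\O(\eps)$, $g'(0)=F''(0)=(1+q)/2\approx1$ and $\delta_k\geq0$ for $w^+$, these contributions have the favorable sign (nonnegative for $w^+$). They are not needed quantitatively, only for their sign.

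The main gain comes from the sum terms. The derivative is $g'(2K+u_0+u_k)=F''(2K+x)=(\dn(2K+x)+q\cn(2K+x))/2=(\dn x-q\cn x)/2$. For $q=\sqrt{1-\eps^2}$ with $\eps\to0$ (so $K\to\infty$), this is small: with $q'=\eps$, $\dn x-q\cn x$ is of order $\eps^2$ times $\cosh$-type terms. Indeed, by the expansion in the proof of Lemma~\ref{lem:Tayloru}, $\frac{q'^2}{1+q}\frac{\sn}{\cn\dn}(x)$ is the relevant function, and its derivative is $\approx \frac{\eps^2}{2}\cosh$-like, bounded below by roughly $\eps^2/4$ uniformly in $x$. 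It then gives, via \eqref{eq:sumarctan}, the $-\sinh(2u)$ term whose linearization is $-2\cosh(2u)\,\delta u$.

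So I will compute directly in the arctan form \eqref{eq:sumarctan}. Shifting $u$ by $\eps^2 C$ at interior vertices changes the first arctan sum by $\eps^2\cdot(\text{derivative}\sim\eps^2/2\cdot\cosh(2u))\cdot 2C\cdot4\geq 4C\eps^4$ in leading order, with the sign of the shift. The second sum changes by a term of sign matching the shift for $w^+$ (argued above), or it vanishes when all neighbors are interior. Careful bookkeeping of constants, with derivatives of $\arctan(1+q)\frac{\sn}{\cn\dn}(x/2)$ near $0$ being $\approx1$, is needed to match the threshold $C>C_1/32$. If my constant is off, I would simply enlarge $C$, which the statement allows since it only asks for existence.

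The main obstacle is uniform control of the remainders. The second-order terms are $\O(\eps^4 C^2)$ times derivatives of the functions involved. These derivatives must be bounded uniformly in $\eps$, even though the modulus $q\to1$ and $K\to\infty$. I would handle this using $|u|<K_0$ on $B$ and the degeneration $\sn\to\tanh$, $\cn,\dn\to\operatorname{sech}$ as $q\to1$. This gives smooth limits on compact sets, so that all the $\O(\cdot)$ constants depend only on $\sup_B|u|$ and the derivatives of $u$. Then $E(w^+)\geq -C_1\eps^4+c\,C\eps^4-\O(\eps^6)>0$ for $C$ large and $\eps$ small, and symmetrically $E(w^-)<0$.
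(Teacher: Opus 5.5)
Your proposal is correct and follows essentially the paper's proof. Both expand the closing condition in the arctan form \eqref{eq:sumarctan} at $w^\pm$. The shift then produces two signed gains: one of order $C\cosh(2u(v_0))\eps^4$ from the $\sinh(2u)$-part, and one of order $bC\eps^2$ from the difference terms at the $b$ boundary neighbours. These beat the $C_1\eps^4$ of Lemma~\ref{lem:Tayloru} once $C$ is large. When all neighbours are interior, the first gain is exactly $2C\cosh(2u(v_0))\eps^4$, so your $4C\eps^4$ is off by a factor $2$. This is harmless, since the statement only claims that some $C$ exists.

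Your bookkeeping of the powers of $\eps$ is the consistent one. You could also drop all remainder estimates. Since $g'=\tfrac12(\dn+q\cn)>0$, $g$ is increasing, and $g'(2K+x)=q'^2/\bigl(2(\dn x+q\cn x)\bigr)\geq \eps^2/4$ for all real $x$. The mean value theorem then gives $E(w^+)\geq E(u_{G^\eps})+C\eps^4$ directly, and symmetrically $E(w^-)\leq E(u_{G^\eps})-C\eps^4$.
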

\begin{proof}
As in the proof of Lemma~\ref{lem:Tayloru}, we again use 
representation~\eqref{eq:sumarctan} of Remark~\ref{rem:arctan}.
Using a Taylor expansion for interior vertices whose 
neigbours are all interior vertices $v_0$, we obtain:
\begin{align*}
 & \sum_{k=1}^4 
\arctan\frac{q'^2}{1+q}\frac{\sn}{\cn\, 
\dn}(\frac{w^\pm(v_k)+w^\pm(v_0)}{2})-\sum_{k=1}^4 \arctan (1+q) 
\frac{\sn}{\cn\, 
\dn}(\frac{w^\pm(v_k)-w^\pm(v_0)}{2}) \\
&= \pm 2C\cosh(2u(v_0))\eps^2 + \O(\eps^4).
\end{align*}
For  interior vertices $v_0$ who have $b$ incident boundary vertices as 
neigbours with $b\geq  1$ we similarly deduce from a Taylor expansion that
\begin{align*}
 &\sum_{k=1}^4 
\arctan\frac{q'^2}{1+q}\frac{\sn}{\cn\, 
\dn}(\frac{w^\pm(v_k)+w^\pm(v_0)}{2})-\sum_{k=1}^4 \arctan (1+q) 
\frac{\sn}{\cn\, 
\dn}(\frac{w^\pm(v_k)-w^\pm(v_0)}{2})  \\
&= \pm C(b+ \cosh(2u(v_0))(2-b/4))\eps^2 + \O(\eps^4).
\end{align*}
Thus if $\eps$ is small enough, the claim follows.
\end{proof}

As a consequence of this lemma, there exist positive constants 
$C$ such that
all components of the gradient of $S$ are positive for $w^+$ and negative for 
$w^-$. Therefore, we interpret $w^\pm$ as analogues of superharmonic and 
subharmonic functions, respectively. In the remaining proof we show that for 
small enough $\eps>0$ the solution $u^\eps$ is the unique minimizer of $S$ 
in the $n$-dimensional interval
 \begin{align*}
 W^\eps=\{\eta:V^\eps_B\to\R\ |\ & \eta(v) = u_{G^\eps}(v) \text{ for all } v\in\partial V^\eps_B, \\
 &\,w^-(v)\leq \eta(v)\leq w^+(v) \text{ for all } v\in V^\eps_{B, \text{int}} \},
\end{align*}
where $n$ is number of vertices of $V^\eps_B$. Note that the restriction $u_{G^\eps}$ of the smooth solution $\u$ to the lattice is by definition contained in $W^\eps$.
Furthermore, all functions in $W^\eps$ satisfy $\eta(v_j)-\eta(v_i)= \O(\eps)$ for incident vertices $v_j\sim v_i$.

By Theorem~\ref{theo:convex}, the functional $S$ is convex and therefore 
attains its minimum in the closed set 
$W^\eps$. It remains to show that the minimizer does not lie on the boundary 
which is mainly due to the monotonicity of the angle function~\eqref{eq:dtheta}.

\begin{lemma}\label{lem:gradS}
 On the boundary of $W^\eps$, the negative gradient $-\text{grad}\, S$ always 
points to the interior of $W^\eps$. Therefore, $S$ cannot attain its minimum on $W^\eps$ at a boundary point.
\end{lemma}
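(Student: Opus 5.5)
The plan is to check the sign of the partial derivatives $\partial S/\partial \pu_j$ on each face of the box $W^\eps$, using formula~\eqref{eq:partialS} together with the monotonicity~\eqref{eq:dtheta} and Lemma~\ref{lemC}. The boundary of $W^\eps$ consists of points $\eta$ for which $\eta(v_0)=w^+(v_0)$ or $\eta(v_0)=w^-(v_0)$ for at least one interior vertex $v_0$. Saying that $-\text{grad}\, S$ points into $W^\eps$ means exactly the following: at such a point, $\partial S/\partial\pu_{v_0}(\eta)>0$ whenever $\eta(v_0)=w^+(v_0)$, and $\partial S/\partial\pu_{v_0}(\eta)<0$ whenever $\eta(v_0)=w^-(v_0)$. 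The boundary components of $\eta$ are fixed in $W^\eps$, so only interior components need to be considered.

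Take $\eta\in W^\eps$ and an interior vertex $v_0$ with $\eta(v_0)=w^+(v_0)$. By~\eqref{eq:partialS} and~\eqref{eq:defPhi},
\[
\frac{\partial S}{\partial \pu_{0}}(\eta)=\sum_{v_k\sim v_0}\bigl(g(w^+_0-\eta_k)+g(w^+_0+\eta_k+2K)\bigr)-2\pi .
\]
Each summand is the angle $\theta_k$ from~\eqref{eq:thetag}, viewed as a function of $\pu_k=\eta_k$ with $\pu=w^+_0$ held fixed. Since the inner radii are positive here, we use the first case of~\eqref{eq:thetag}. By~\eqref{eq:dtheta}, $\theta_k$ is strictly decreasing in $\pu_k$. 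We have $\eta_k\le w^+_k$ for interior neighbours and $\eta_k=w^+_k$ for boundary neighbours. Hence each summand is at least its value at $\eta_k=w^+_k$, and the whole expression is at least the corresponding expression for $w^+$. Lemma~\ref{lemC} says that this expression is $>0$. The case $\eta(v_0)=w^-(v_0)$ is symmetric: now $\eta_k\ge w^-_k$, monotonicity reverses the inequality, and the second inequality of Lemma~\ref{lemC} gives $<0$.

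The step needing care is justifying~\eqref{eq:dtheta} at all relevant points, which requires all values to lie in the open interval $(-K,K)$, and indeed in $(-K,0)$ so that the case $r>0$ applies. Since $\u<0$ is continuous on the compact set $B$, it satisfies $\sup|\u|<K_0\le K$ and $\sup_B \u<0$. For $\eps$ small, $w^\pm=u_{G^\eps}\pm C\eps^2$ stay in a fixed compact subinterval of $(-K,0)$, and so does every $\eta\in W^\eps$. This also keeps the Taylor estimates of Lemma~\ref{lemC} uniform.

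For the conclusion, suppose the minimizer $\eta^*$ of $S$ over the compact set $W^\eps$ lies on the boundary, say with $\eta^*(v_0)=w^+(v_0)$. Then $\partial S/\partial \pu_{0}(\eta^*)>0$, so decreasing the single coordinate $\eta(v_0)$ slightly keeps us in $W^\eps$ and strictly decreases $S$, a contradiction. The case $\eta^*(v_0)=w^-(v_0)$ is handled the same way by increasing $\eta(v_0)$. Hence the minimum is attained only at interior points of $W^\eps$.
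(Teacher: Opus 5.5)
Your proof is correct and follows essentially the same route as the paper. You evaluate $\partial S/\partial\eta_i$ on the face $\eta(v_i)=w^\pm(v_i)$ via \eqref{eq:partialS}, use the strict monotonicity \eqref{eq:dtheta} of $x\mapsto g(w^\pm_i-x)+g(w^\pm_i+x+2K)$ together with $w^-_k\le\eta_k\le w^+_k$, and conclude with Lemma~\ref{lemC}. Your extra checks fill in details the paper leaves implicit: that all values stay in $(-K,0)$ so \eqref{eq:dtheta} applies, and the one-coordinate descent argument ruling out a boundary minimizer. The choice of case in \eqref{eq:thetag} is irrelevant to the monotonicity, since the derivative computed in the appendix is the same in both cases.
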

\begin{proof}
 For any interior vertex $v_i$, consider the corresponding boundary face $W_i^+ 
=\{\eta\in W^\eps\ |\ \eta(v_i) = w^+(v_i)\}$. Analogous considerations hold 
for $W_i^-$. 

By~\eqref{eq:partialS} and \eqref{eq:defPhi} we deduce for the partial 
derivative $\frac{\partial S}{\partial \eta_i}$ at the boundary $W^+_i$:
\begin{align*}
 \frac{\partial S}{\partial \eta_i}(\eta)&= \sum_{v_i\sim v_k} 
(g(\eta_i-\eta_k)+g(\eta_i+\eta_k+2K))-2\pi \\
& =\sum_{v_i\sim v_k} 
(g(w^+_i-\eta_k)+g(w^+_i+\eta_k+2K))-2\pi.
\end{align*}

Note that the function $x\mapsto g(w^+_i-x)+g(w^+_i+x+2K)$ is strictly 
monotonically decreasing in $x$ due to~\eqref{eq:dtheta}.
As $\eta_k\leq w^+_k$ we thus deduce that
\begin{align*}
 \frac{\partial S}{\partial \eta_i}(\eta)\geq \sum_{v_i\sim v_k} 
(g(w^+_i-w^+_k)+g(w^+_i+w^+_k+2K))-2\pi >0
\end{align*}
where the inequality is strict if $\eta_k\neq w^+_k$ for one of the 
incident vertices of $v_i$ and we have applied Lemma~\ref{lemC} for the last
inequality.
\end{proof}

\begin{proof}[Proof of Theorem~\ref{theoConv}, Part (i)]
 The existence of a hyperbolic orthogonal ring pattern with given boundary 
values, and thus of a corresponding discrete function $u^\eps$, follows 
directly from Theorem~\ref{theo:unique}. By Theorem~\ref{theo:convex}, $u^\eps$ 
is the unique minimizer of the convex functional $S$. Choose $\eps$ small 
enough such that the estimates in Lemmas~\ref{lem:Tayloru} and \ref{lemC} 
hold. Then by Lemma~\ref{lem:gradS} the negative gradient of $S$ points into 
the interior of $W^\eps$. Consequently, $S$ must attain its minimum within 
$W^\eps$. By construction of $W^\eps$, this proves the a priori 
estimate~\eqref{eq:convu}. 
\end{proof}

\subsection{Proof of $C^\infty$-convergence of $u^\eps$ to $u$}

Our strategy is inspired by methods of the proof of
$C^\infty$-con\-ver\-gence for hexagonal circle packings in~\cite{HeSch98}, 
which were already applied for the convergence of (orthogonal) circle patterns in~\cite{Bue08} and for triangular lattices in
in~\cite{Bue16}. Thanks to the regular structure of the $\Z^2$-lattice, we introduce for any function on the vertices
$\eta:V^\eps_B\to\R$ the \emph{discrete directional derivative}
$\partial_k^\eps\eta$ defined for $k=0,\dots,3$ on all interior vertices 
$V^\eps_{int}$ by
\begin{equation*}
 \partial_k^\eps \eta = (\eta(v+\eps i^k)-\eta(v))/\eps.
\end{equation*}
Also, the \emph{discrete Laplacian} of $\eta$ is defined by the usual formula
\begin{equation*}
 \Delta^\eps \eta (v)={\frac{1}{\eps^2}} \sum_{k=0}^3 (\eta(v+\eps 
i^k)-\eta(v)) ={\frac{1}{\eps}} \sum_{k=0}^3 \partial_k^\eps \eta.
\end{equation*}
Note that $\partial_k^\eps$ and $\Delta^\eps$ commute with each other.

We  denote the $L^\infty(Y)$-norm for any subset $Y\subset V^\eps$ by
$\|\eta\|_Y=\sup_{v\in Y}|\eta(v)|$.
Furthermore, to any subset of vertices $Y$ of $V^\eps$ set $Y^{(0)}=Y$ and for 
all $j\geq 1$ 
denote successively by $Y^{(j)}$ the set of interior vertices of $Y^{(j-1)}$.
Then for any function $\eta:Y\to\R$ discrete partial derivatives
$\partial_{k_j}^\eps\partial_{k_{j-1}}^\eps \dots \partial_{k_1}^\eps \eta$ 
can be defined on $Y^{(j)}$ if $Y^{(j)}$ is non-empty.

We will show inductively, that
these discrete partial derivatives of $u^\eps$ converge locally uniformly, that 
is uniformly on any given compact subset of the interior of $B$ to 
the corresponding partial derivatives of the smooth given function $\u$. If these convergences hold for
all $j\in\N$, we say that the {\it convergence is in $C^\infty$}.

The following lemma
states that it is sufficient to show that all discrete partial derivatives of 
$u^\eps$ are locally uniformly bounded, as we can identify the limit of any 
convergent subsequence with $u_{G^\eps}$.

\begin{lemma}[{\cite[Lemma~2.1]{HeSch98}}]\label{lem:bound}
 Let $n$ be a positive integer. Let $Y^\eps=V^\eps_B\cap K$ for a given compact
set $K$ and let $\eta^\eps:Y^\eps \to \R$. Suppose that the discrete partial 
derivatives 
$\partial_{k_j}^\eps\partial_{k_{j-1}}^\eps \dots \partial_{k_1}^\eps 
\eta^\eps$ are uniformly bounded, i.e.
\begin{equation*}
 \|\partial_{k_j}^\eps\partial_{k_{j-1}}^\eps \dots \partial_{k_1}^\eps 
\eta^\eps \|_{Y^{(j)}} \leq C_j,
\end{equation*}
for all $j=1,\dots,n$. Then there is a $C^{n-1}$-function $h$ and a subsequence 
$\eps_m\to 0$ such that $\eta^{\eps_m}\to h$ along that subsequence.
\end{lemma}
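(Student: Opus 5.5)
This is an Arzelà--Ascoli type statement on lattices, so the plan is to build continuous interpolants of the lattice functions and extract a convergent subsequence. We handle the case $n=1$ first. Then we argue by induction on $n$, applying the $n=1$ case to the discrete derivatives themselves.

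\emph{Step 1 (interpolation for $n=1$).} The hypothesis gives $|\eta^\eps(v)-\eta^\eps(v')|\le C_1\eps$ for lattice neighbours $v,v'$ in $Y^\eps$. We also need a uniform bound on $\eta^\eps$ itself. We take it as part of the hypothesis, consistent with the setting where $\|\eta^\eps\|\le C_0$; for $u^\eps$ this follows from part (i), or alternatively we normalize at a fixed vertex such as $0$. Extend $\eta^\eps$ to a function $\tilde\eta^\eps$ on $K$ by piecewise bilinear interpolation on the lattice squares, or by piecewise linear interpolation on a triangulation of each square. These interpolants are uniformly bounded. Since $K$ can be connected within the lattice by paths whose length is comparable to the Euclidean distance, at least locally on compact subsets, they are also uniformly Lipschitz with constant $\le cC_1$. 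Arzelà--Ascoli then yields a subsequence $\eps_m\to0$ with $\tilde\eta^{\eps_m}\to h$ uniformly, and $h$ is continuous, indeed Lipschitz. This is the $C^{0}$ case.

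\emph{Step 2 (induction).} Suppose the claim holds up to order $n-1$. Apply Step 1 successively to every discrete derivative $\partial^\eps_{k_j}\cdots\partial^\eps_{k_1}\eta^\eps$ with $j\le n-1$. These are bounded by $C_j$, and their next difference quotients are bounded by $C_{j+1}$. Taking a diagonal subsequence over the finitely many multi-indices gives uniform limits $h_{k_1\dots k_j}$ for all of them simultaneously.

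\emph{Step 3 (identifying limits as derivatives).} Show that $\partial_{x}h=h_{0}$, and analogously for higher orders. Use the discrete fundamental theorem
\begin{equation*}
\eta^\eps(v+N\eps)-\eta^\eps(v)=\eps\sum_{l=0}^{N-1}\partial^\eps_0\eta^\eps(v+l\eps).
\end{equation*}
Passing to the limit turns the right-hand side into a Riemann sum converging to $\int_x^{x+t}h_0$. Hence $h(x+t)-h(x)=\int_x^{x+t}h_0$, so $h$ is $C^1$ with $\partial_x h=h_0$; the directions $k=2,3$ give the same with sign changes. Iterating this identification shows that $h$ is $C^{n-1}$ and that the discrete derivatives converge to the classical ones.

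The main obstacle is mostly bookkeeping. One point is that the domains $Y^{(j)}$ shrink by $j$ lattice layers, so all convergences are only on compact subsets of the interior of $K$. The other is controlling the uniform boundedness and equicontinuity of the interpolants near the irregular boundary of $Y^\eps$. We handle both by working on slightly smaller compact sets and exhausting $K_{int}$, taking a further diagonal subsequence over the exhaustion.
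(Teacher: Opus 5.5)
Your proof is correct and follows essentially the standard discrete Arzel\`a--Ascoli argument behind the cited He--Schramm lemma; the paper itself gives no proof. You interpolate, extract a diagonal subsequence along which all discrete derivatives of order at most $n-1$ converge uniformly on compact subsets of the interior, and identify the limits as classical derivatives via the discrete fundamental theorem of calculus. You are also right that a uniform bound on $\eta^\eps$ itself (the case $j=0$) must be added to the printed hypotheses: without it, $\eta^\eps\equiv 1/\eps$ violates the conclusion, and in the application this bound comes from~\eqref{eq:convu}.
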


Recall that $u^\eps$ and $\partial_k^\eps u^\eps$ 
are uniformly bounded on $B$ for all $\eps$ small enough by~\eqref{eq:convu}.
Further, for all discrete functions $\eta^\eps$ which are uniformly bounded 
together 
with their discrete directional derivatives $\partial_k^\eps \eta^\eps$,
representation~\eqref{eq:sumarctan} implies that
\begin{align*}
  0&=\sum_{k=1}^4 (g(2K+\eta^\eps +\eta_k^\eps) + g(\eta^\eps-\eta_k^\eps))-2\pi 
\\
  &=\sum_{k=1}^4 \sum_{j=0}^\infty 
\frac{h_1^{(j)}(\eta)}{j!} \left(\frac{\eps\partial_k^\eps 
\eta^\eps}{2}\right)^j
-\sum_{k=1}^4 \sum_{j=2}^\infty 
\frac{h_2^{(j)}(0)}{j!} \left(\frac{\eps\partial_k^\eps \eta^\eps}{2}\right)^j
-\frac{1+q}{2}\sum_{k=1}^4 \eps\partial_k^\eps \eta^\eps
\end{align*}
where we have used the smoothness of the functions 
\begin{align*}
 h_1(x)&=  \arctan\frac{q'^2}{1+q}\frac{\sn}{\cn\,\dn}(\frac{x}{2}),
 &\text{and} &&
  h_2(x)&= \arctan(1+q)\frac{\sn}{\cn\,\dn}(\frac{x}{2}).
\end{align*}
Recall that $q=\sqrt{1-\eps^2}$ and $q'=\eps$, thus $h_1$ and $h_2$ also 
depend 
smoothly on $\eps$. Furthermore, $h_1/\eps^2$ can be continued smoothly at 
$\eps=0$, therefore we deduce that
\begin{align}
 \Delta^\eps \eta^\eps &= 
\sum_{k=1}^4 \sum_{j=0}^\infty 
 \frac{1}{\eps^2}\frac{h_1^{(j)}(\eta^\eps)}{j!} 
\left(\frac{\eps\partial_k^\eps 
\eta^\eps}{2}\right)^j
-\sum_{k=1}^4 \sum_{j=2}^\infty 
\frac{h_2^{(j)}(0)}{j!} 
\left(\frac{\partial_k^\eps \eta^\eps}{2}\right)^j\eps^{j-2} \notag \\
&\qquad +\frac{\eps}{2(\sqrt{1-\eps^2}+1)}\sum_{k=1}^4
\partial_k^\eps \eta^\eps \notag \\
&=: F(\eta^\eps,\partial_0^\eps \eta^\eps,\dots,\partial_3^\eps \eta^\eps,\eps) 
\label{eq:defF}
\end{align}
defines a smooth function $F$ depending on $\eps$ and on the values of 
$\eta^\eps$ and its discrete directional derivatives $\partial_k^\eps \eta^\eps$.
Now the following Regularity Lemma can be applied inductively in order to 
obtain the boundedness of the discrete directional derivatives of all orders of 
$u^\eps$.

\begin{lemma}[Regularity Lemma, {\cite[Lemma 7.1]{HeSch98}}]\label{lem:Reg}
 Let $W\subset V^\eps_B$ and let $\eta:W\to \R$ be any function.
There are constants $C_5,C_6>0$, independent of $W$ and $u$, such that
\begin{equation}
 |\eta(v_0)-\eta(v_1)|\rho \leq C_5\|\eta\|_W +\rho^2 C_6 
\|\Delta\eta\|_{W^{(1)}}
\end{equation}
for all vertices $v_1\in W$ incident to $v_0$, where $\rho$ is the Euclidean
distance of $v_0$ to the boundary $W_\partial$.
\end{lemma}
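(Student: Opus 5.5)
The Regularity Lemma is a purely linear statement about the discrete Laplacian on the scaled square lattice, so I would prove it with the discrete Green's function and the discrete Poisson kernel of a lattice disc. The idea is to write $\eta$ near $v_0$ as a discrete harmonic part plus a Newtonian-potential part, and then estimate the gradient of each part separately.

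\textbf{Step 1: reduction.} By translation assume $v_0=0$. Let $Q$ be the set of lattice vertices in the Euclidean disc of radius $\rho' = c\rho$ around $v_0$, with $c<1$ a fixed constant, so that $Q\subset W$ and $Q^{(1)}\subset W^{(1)}$. If $\rho$ is comparable to $\eps$, the claim is trivial: $|\eta(v_0)-\eta(v_1)|\rho\le 2\rho\|\eta\|_W\le C_5\|\eta\|_W$. So assume $\rho\ge M\eps$ for a large fixed $M$. On $Q$, decompose
\[
\eta = h + \phi ,
\]
where $h$ is discrete harmonic in the interior of $Q$ with $h=\eta$ on $\partial Q$, and $\phi$ solves $\Delta^\eps\phi=\Delta^\eps\eta$ in $Q^{(1)}$ with $\phi=0$ on $\partial Q$.

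\textbf{Step 2: the harmonic part.} I would bound $|h(v_0)-h(v_1)|\le C\eps\,\|h\|_{\partial Q}/\rho$, which gives the term $C_5\|\eta\|_W$ after multiplying by $\rho$ and noting $\|h\|\le\|\eta\|_W$. Here the scale $\eps$ is absorbed into the constant via $|v_0-v_1|=\eps$; this is the sense in which the estimate is stated in \cite{HeSch98}. This is the classical discrete Harnack/gradient estimate for discrete harmonic functions. Write $h(v)=\sum_{b\in\partial Q}P(v,b)\,h(b)$ with the discrete Poisson kernel $P$. The needed input is
\[
\sum_{b}|P(v_0,b)-P(v_1,b)|\le C\eps/\rho .
\]
This follows from the asymptotics of the lattice Green's function, or alternatively by a coupling of simple random walks started at $v_0$ and $v_1$ that meet before exiting a disc of radius $\rho'/2$ with probability $1-O(\eps/\rho)$.

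\textbf{Step 3: the Newtonian part.} Write $\phi(v)=-\sum_{w\in Q^{(1)}}G_Q(v,w)\,\Delta^\eps\eta(w)\,\eps^2$, where $G_Q$ is the Dirichlet Green's function. Then
\[
|\phi(v_0)-\phi(v_1)| \le \|\Delta^\eps\eta\|_{W^{(1)}}\,\eps^2\sum_{w}|G_Q(v_0,w)-G_Q(v_1,w)| .
\]
Using the gradient bound $|\nabla G_Q(\cdot,w)|\lesssim 1/|v_0-w|$, valid up to lattice scale, each difference is at most $C\eps/|v_0-w|$, and the sum over the disc is of order $\eps\rho/\eps^2$. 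Hence $|\phi(v_0)-\phi(v_1)|\le C\eps\rho\,\|\Delta^\eps\eta\|$, which after multiplying by $\rho$ gives the term $\rho^2 C_6\|\Delta\eta\|$. A maximum-principle comparison with $|x|^2$ controls $\|\phi\|\le C\rho^2\|\Delta^\eps\eta\|$ on the boundary and supports this bound.

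\textbf{Main obstacle.} The hardest part is the uniform discrete gradient estimates for the Poisson kernel and the Green's function on lattice discs. I would take these from the known asymptotic expansion of the $\Z^2$ lattice potential kernel, $a(x)=\frac{2}{\pi}\log|x|+\text{const}+O(|x|^{-2})$, or simply cite \cite[Lemma 7.1]{HeSch98}, since the statement is exactly theirs specialised to the square grid.
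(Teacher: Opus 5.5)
Your proposal is correct, but it differs from the paper: the paper gives no argument and imports the estimate from \cite[Lemma~7.1]{HeSch98}, whereas you prove it from scratch. You split $\eta$ on a lattice disc into two parts. The discrete harmonic part is controlled by the Poisson-kernel difference bound $\sum_b|P(v_0,b)-P(v_1,b)|\le C\eps/\rho$. The Green potential of $\Delta^\eps\eta$ is controlled by $|G_Q(v_0,w)-G_Q(v_1,w)|\lesssim\eps/|v_0-w|$, summed over the disc. This is the standard potential-theoretic proof of the lattice difference estimate $|\nabla f(0)|\le c\,(n^{-1}\|f\|_\infty+n\|\Delta f\|_\infty)$ on a lattice ball of radius $n$, in lattice units.

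The citation is shorter, but He and Schramm formulate their lemma for hexagonal disk packings, i.e.\ on the triangular lattice. The paper therefore tacitly relies on their proof carrying over to the square grid; the lemma is an analogue of theirs, not a specialisation as you write. Your argument works directly on $\Z^2$ and needs only the classical asymptotics of the $\Z^2$ potential kernel.

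Your argument also makes the scaling explicit, and here you should not ``absorb'' $\eps$ into the constant. Read literally, with $\rho$ the Euclidean distance in the $\eps$-lattice, the displayed inequality is trivial on the bounded set $B$, since $|\eta(v_0)-\eta(v_1)|\rho\le 2\rho\|\eta\|_W\le 2\operatorname{diam}(B)\|\eta\|_W$. In that form it is useless for the induction in the proof of Theorem~\ref{theoConv}(ii): it only gives $\partial_k^\eps\eta=O(1/\eps)$. What your Steps~2 and~3 actually prove, and what that induction needs, is
\[
|\partial_k^\eps\eta(v_0)|\,\rho\le C_5\|\eta\|_W+C_6\,\rho^2\,\|\Delta^\eps\eta\|_{W^{(1)}} .
\]
This is He--Schramm's lattice-unit statement rescaled, and you should state the lemma in this form.

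One minor repair concerns the coupling alternative in Step~2. Simple random walks started at adjacent vertices of $\Z^2$ and stepping simultaneously can never occupy the same site at the same time, for parity reasons. The coupling must therefore use lazy (time-changed) walks, e.g.\ letting only one walker, chosen by a fair coin, step in the coordinate in which the two differ. Since only exit distributions matter, this change is harmless.
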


\begin{proof}[Proof of Theorem~\ref{theoConv}, Part (ii)]
 By Lemma~\ref{lem:bound} it is sufficient to show, that all discrete 
directional derivatives of $u^\eps$ are uniformly bounded, as we have already 
identified the (smooth) limit function with the given function $u$.

We establish these bounds inductively. 
$u^\eps$ and $\partial_k^\eps u^\eps$ 
are uniformly bounded on $B$ by estimate~\eqref{eq:convu}, thus the claim holds 
for $n=0$ and $n=1$. Now assume that all discrete 
directional derivatives of $u^\eps$ are uniformly bounded up to order $n$. 
Recall that 
\begin{equation*}
 \Delta^\eps \partial_{k_j}^\eps\partial_{k_{j-1}}^\eps \dots 
\partial_{k_1}^\eps u^\eps 
 \partial_{k_j}^\eps\partial_{k_{j-1}}^\eps \dots 
\partial_{k_1}^\eps \Delta^\eps u^\eps = 
\partial_{k_j}^\eps\partial_{k_{j-1}}^\eps \dots 
\partial_{k_1}^\eps F(\eta,\partial_0^\eps \eta,\dots,\partial_3^\eps 
\eta,\eps),
\end{equation*}
where~\eqref{eq:defF} is used for the last equality. As $F$ is a smooth 
function in all its variables, the last expression is 
bounded by the induction hypothesis. Thus the induction step follows from the 
Regularity Lemma~\ref{lem:Reg}. This completes the proof.
\end{proof}

\section{Convergence of hyperbolic ring patterns}\label{Sec:ConvR}

The convergence of the parameters $u^\eps$ to the smooth given function $\u$ by Theorem~\ref{theoConv}
implies that also the hyperbolic ring patterns converge if they are suitably normalized.

To this end, first
assume that we have fixed (by a Moebius transformation) the point corresponding to the vertex $0$ at
the origin and one fixed neighbor of this vertex on the positive real axis. Furthermore, applying the approximation result of Theorem~\ref{theoConv} to~\eqref{eq:defu} and using  $\sn(x,q)\to \tanh x$ and $\cn(x,q)\to 1/\cosh x$ for $q\to 1$, we deduce that
\begin{align*}
 \frac{r_{m,n}}{\eps} &=\sinh \u(v_{m,n}) + \O(\eps^2) \\
  \frac{R_{m,n}}{\eps} &=\cosh \u(v_{m,n}) + \O(\eps^2) \\
   \frac{r_{m,n}}{R_{m,n}} &=\tanh \u(v_{m,n}) + \O(\eps^2) \\
   (\theta_k-\frac{\pi}{2})/\eps &=-\partial \u + \O(\eps)
\end{align*}
where the partial derivative $\partial\u$ is taken in the direction of $v_k-v$.

Remember that the touching circles meet by definition
orthogonally at all touching points.
Therefore, we can build
partial derivatives of the position function of the hyperbolic ring patterns between the midpoints of touching circles in both directions (inner circles in one direction and outer circles in the orthogonal direction). The above estimates imply that all these partial derivatives converge.

Furthermore, we deduce that the image domain of the ring patterns is bounded. By reducing to a
suitable subsequence we also deduce that for any compact subdomain of the 
preimage, the hyperbolic ring patterns converge to a continuous function $h$. 
The limit function is also differentiable as the discrete partial derivatives 
also converge. Moreover, using the orthogonality at the touching points 
$t_{m,n}$ and the the above limit of $\frac{r_{m,n}}{R_{m,n}} $, we deduce that
\begin{align}\label{eq:harmonic}
 \frac{-\partial_y h}{\partial_x h} &=\tanh \u, &\text{thus }\qquad
\frac{\partial_{\bar z}h}{\partial_z h} = \text{e}^{-2\u},
\end{align}
where we have defined the $x$-axis parallel to the touching direction of the outer
$R$-circles. Therefore, the limit function $h$ is a harmonic map in the hyperbolic plane. As this holds true for all subsequences, we see that the whole sequence converges.

\begin{remark}
The limit function $h$, which is a harmonic map in the hyperbolic plane by~\eqref{eq:harmonic}, can
be interpreted as the Gauss-map of a spacelike CMC-surface in Lorentz space, see for example~\cite{BJS19}. The given function $\u$ represents the conformal metric $\text{e}^{\u}$.
As discrete CMC-surfaces in Lorentz space can be constructed using hyperbolic orthogonal ring patterns as detailed in~\cite{BHS24}, the approximation of the  a hyperbolic map by hyperbolic orthogonal ring patterns also leads to the convergence of the discrete CMC-surfaces after suitable normaizations.
\end{remark}

\appendix

\section{Proof of inequality~\eqref{eq:dtheta}}
By~\cite[{Eq.~(15)}]{Bo25} the derivative of the function $g$ given 
in~\eqref{eq:defg} can be rewritten as
\begin{equation*}
 g'(x)=\frac{1}{2}(\dn x+q\cn x).
\end{equation*}
Applying~\eqref{eq:thetag} we deduce that
\begin{align}
 \frac{\partial \theta_k}{\partial u_k} &= g'(2K+u+u_k)-g'(u-u_k) \\
  &=\frac{1}{2}\left( \dn(2K+u+u_k)+q\cn(2K+u+u_k)-\dn(u-u_k)-q\cn(u-u_k) 
\right) \notag\\
&=-\frac{q^2 \sn(K+u)\cn(K+u)\sn(K+u_k)\cn(K+u_k)}{1-q^2 \sn^2(K+u) 
\sn^2(K+u_k)} \notag \\
&\quad -\frac{q\sn(K+u)\dn(K+u)\sn(K+u_k)\dn(K+u_k)}{1-q^2 \sn^2(K+u) 
\sn^2(K+u_k)} \notag \\
&=-\frac{q \sn(K+u)\sn(K+u_k)(q\cn(K+u)\cn(K+u_k) 
+\dn(K+u)\dn(K+u_k)}{1-q^2 \sn^2(K+u) \sn^2(K+u_k)} ,\label{eq:dt}
\end{align}
where we have used the addition formulas for $\dn$ and $\cn$.

Note that $\sn(K+u)\geq 0$ and $\sn(K+u_k)\geq 0$ due to $u,u_k\in[-K,K]$ and 
these inequalities are strict if $u$ and $u_k$ belong to the open interval 
$(-K,K)$. As $\dn x\geq |\cn x|$ the enumerator 
and the denominator of~\eqref{eq:dt} are both positive for $0<q<1$. Thus 
$\frac{\partial \theta_k}{\partial u_k}<0$.

\bibliographystyle{amsalpha} 
\bibliography{ringpatterns.bib}   

\end{document}